\newtheorem{theorem}{Theorem}
\newtheorem{prop}[theorem]{Proposition}
\newtheorem{cor}[theorem]{Corollary}
\theoremstyle{plain}
\newtheorem{definition}[theorem]{Definition}
\newtheorem{example}[theorem]{Example}
\newtheorem{remark}[theorem]{Remark}
\title{Properties of the poset of Dyck paths ordered by inclusion}
\author{Jennifer Woodcock }
\begin{document}
\begin{titlepage}
\text{ }
\\
\\
\\
\\
\\
\\
\begin{flushright}
\bfseries
PROPERTIES OF THE POSET OF DYCK PATHS\newline
ORDERED BY INCLUSION\newline
\text{ }
\\
\end{flushright}
\text{  }
\\
\\
\\
\\
\\
\\
\\
\\
\\
\\
\\
\\
\\
\\
\begin{flushright}
by: Jennifer Woodcock\newline
for: Dr. Mike Zabrocki\newline
Math 6002\newline
York University\newline
\text{ }\newline
\text{ }\newline
\text{ }\newline
\text{ }\newline
\text{ }\newline
\begin{center}
August 2008
\end{center}
\end{flushright}
\end{titlepage}


\newpage
\thispagestyle{empty}
\begin{figure}[t]
	\centering
		\includegraphics[width=6in]{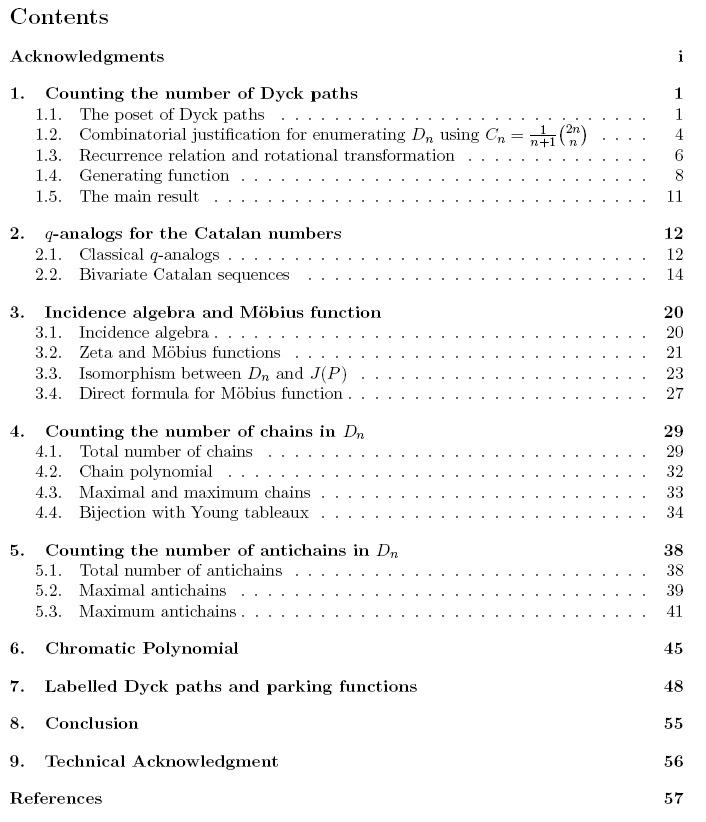}
\end{figure}
\text{  }
\\
\\
\newpage

\thispagestyle{plain}
\pagenumbering{roman}
\section*{Acknowledgments}
\text{  }
\\
\\

As this is perhaps the closest I will come to writing a thesis, I'm taking the liberty of acknowledging the people who have made an intimidating and ambitious project become such a rewarding experience.\newline
\\
First and foremost, I am infinitely grateful to Professor Mike Zabrocki, who believed in me, challenged me, and had the patience of a saint when it came to answering my many, many questions. But aside from helping with the computer technical work and the finicky details of mathematical writing, Mike's influence ran deeper. Like any excellent teacher, he knows it's about more than just giving students the knowledge and the skills. He really helped me to develop confidence in my abilities -- an invaluable and most treasured gift. Thanks, Mike.\newline
\\
Additionally, I couldn't have done this without the support of my family. To Bob, Ryan and Sarah, my deepest thanks for putting up with an absentee wife and mom on so many occasions. No more doodling Dyck paths on restaurant menus when we're out, I promise!\newline
\\
A special thank you goes out to Nichole, who took great care of Ryan and Sarah. Your babysitting services meant more than you know.\newline
\\
Sincere thanks go to Professors Hosh Pesotan and Jack Weiner of the University of Guelph's Department of Mathematics  and Statistics for their proofreading and advice on structuring a math paper. I enjoyed learning along with you during our discussions about all things Catalan and Dyck.\newline
\\
Finally, I also wish to thank: my parents for driving me to succeed; fellow M.A. students and friends for comic relief, carpooling, and shoulders to cry on; programmers at Maplesoft and MuPAD: Combinat for creating software that makes research much less tedious; math teachers and profs. both past and present for their inspiration; and Professor Augustine Wong, Graduate Programme Director of Mathematics and Statistics at York, for his help in getting me started on this journey.\newline
\text{ }
\\
\\
\\
Jennifer Woodcock\newline
August 2008

\newpage

\pagenumbering{arabic}

%
\maketitle
\section{Counting the number of Dyck paths}

\subsection{The poset of Dyck paths}
\text{  }
\\
\\
The Catalan numbers, $C_n=\frac{1}{n+1}\binom{2n}{n}$, are a sequence of integers named after the nineteenth century mathematician Eugene Catalan \cite{MB}.  
They are included as A000108 in \cite{OLEIS}: 1, 1, 2, 5, 14, 42, 132, 429, 1430, 4862, 16796, 58786, 208012, 742900, 2674440, 9694845, 35357670, 129644790, 477638700, 1767263190, 6564120420, 24466267020, 91482563640, 343059613650, 1289904147324.\newline
Objects enumerated by the Catalan numbers occur ubiquitously in the field of combinatorics. In fact, Stanley in \cite{RS2} and \cite{webStanley} has compiled a list of over 165 of these. They include many diverse (and at first glance, unrelated) sets including  graphical objects like the number of rooted, unlabelled binary trees on $n$ vertices, sequences of integers such as $1\leq x_1 \leq \ldots \leq x_n$ such that $x_i \leq i$, geometric items such as the number of ways of splitting an $(n+2)$-gon into $n$ triangles using $n-1$ non-crossing diagonals, as well as several types of lattice paths subject to a variety of conditions.  It is in this latter category that we find the Dyck paths.
\text{ }
\\
\begin{definition}
A Dyck path is a lattice path in the $n \times n$ \itshape square consisting of only north and east steps and such that the path doesn't pass below the line $y=x$ (or main diagonal) in the grid.  It starts at $(0,0)$ and ends at $(n,n)$.  A walk of length $n$ along a Dyck path consists of $2n$ steps, with $n$ in the north direction and $n$ in the east direction.  By necessity the first step must be north and the final step must be east.\normalfont
\end{definition}

\noindent Dyck paths are named after the German mathematician, Walther von Dyck (1856-1934) for whom the Dyck language of formal language theory is also named \cite{webDL}. As described in \cite{NL2}, there is a clear bijection between these lattice paths and the balanced strings of parentheses which make up the Dyck language.  The correspondence can easily be seen by considering each north step as a left parenthesis and each east step as a right parenthesis.  At any point during a Dyck path walk, since we must remain at or above the line $y=x$, the number of north steps taken either equals or exceeds the number of east steps taken, although at the conclusion of the walk the totals are the same.  Therefore when the steps are recorded as parentheses, the string will begin with a left parenthesis and will not contain any unmatched parentheses.  So, sequences of balanced parentheses are an example of another Catalan-enumerated set and the paths which correspond to these sequences are described using the adjective `Dyck'.\newline
\text{ } 
\\
The figure below illustrates the Dyck paths for $n=1$ to $4$ (diagram modified from \cite{webDP}).
\begin{figure}[htbp]
	\centering
		\includegraphics[width=4.2in]{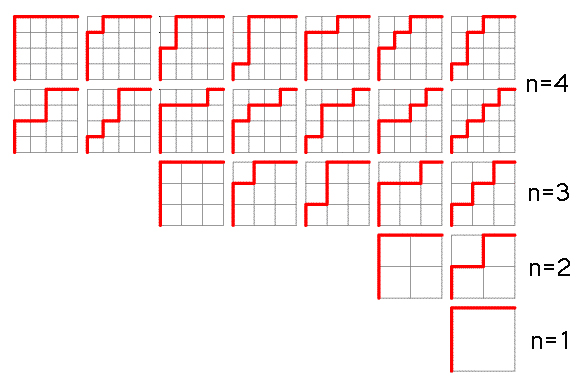}
	\caption{Dyck paths $n=1$ to $4$}
\end{figure}

\begin{definition}
A partially ordered set (or poset for short) is a set of objects, $P$, together with a $\leq$ relation on $P$ which satisfies three conditions:\newline
i) $\leq$ is reflexive ($x \leq x$, for all $x \in P$)\newline
ii) $\leq$ is transitive (if $x\leq y$ and $y \leq z$ then $x \leq z$ for all $x,y,z \in P$)\newline
iii) $\leq$ is antisymmetric (if $x \leq y$ and $y \leq x$, then $x=y$ for all $x,y \in P$).\newline
\end{definition}
We can impose a partial order on the set of Dyck paths by considering one path to be `less than' another if it lies below the other (see example below).  Considering any two Dyck paths of the same length, $d_1$ and $d_2$, we say that $d_1\leq d_2$ iff $d_1$ entirely lies below $d_2$, although the two paths may coincide at some points. In this paper, we refer to this partially ordered set as the poset of Dyck paths ordered by inclusion, although at times we may omit the phrase `ordered by inclusion'. We use $D_n$ to denote the Dyck path poset for paths of length (or order) $n$.\newline
\text{  }
\\
\begin{example}
Here are two Dyck paths from $D_4$.  The path on the left, $d_1$, lies below the path on the right, $d_2$, since they are of the same length, and $d_1$ does not cross over $d_2$ if the paths are superimposed.\newline
\begin{figure}[htbp]
	\centering
		\includegraphics[width=2in]{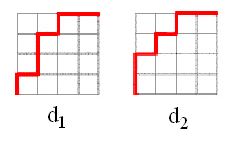}
	\caption{Path $d_1$ lies below path $d_2$}
	\label{fig:below}
\end{figure}
\newline
On the other hand, path $d_1$ does not lie below path $d_3$ since $d_1$ will cross over $d_3$ if the graphs are superimposed.  Similarly, $d_3$ does not lie below $d_1$.  As we shall see, this means that paths $d_1$ and $d_3$ are incomparable.
\begin{figure}[htbp]
	\centering
		\includegraphics[width=2in]{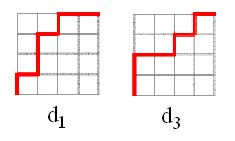}
	\caption{Path $d_1$ does not lie below path $d_3$}
	\label{fig:notbelow}
\end{figure}
\end{example}

\begin{definition}
Given a poset, $P$ and two elements $x,y \in P$, we say that $x$ is comparable to $y$ iff $x \leq y$ or $y \leq x$.  Otherwise we say that $x$ and $y$ are incomparable.
\end{definition}
\begin{definition}
An element $y$ covers (or is a cover for) an element $x$ in a poset, $P$, if $x<y$ (i.e. $x \leq y$, but $x \neq y$) and there is no other element, $z \in P$ where $x<z<y$.
\end{definition}%
\text{  }

It is common to use a Hasse diagram to provide a pictorial representation of the poset.  In the example below showing $D_3$ and $D_4$, the vertices are the Dyck paths and the edges illustrate the cover relationships that exist among the paths.  Also, the elements are grouped into ranks based on the area (or number of complete lattice cells) between each  Dyck path and the line $y=x$. As we will see in a later section, this area parameter will enable us to define a `$q$-analog' of the Catalan numbers.
\newpage
\begin{example}
The diagrams below show the Hasse diagrams for $D_3$ and $D_4$.
\begin{figure}[h]
	\centering
		\includegraphics[width=5in]{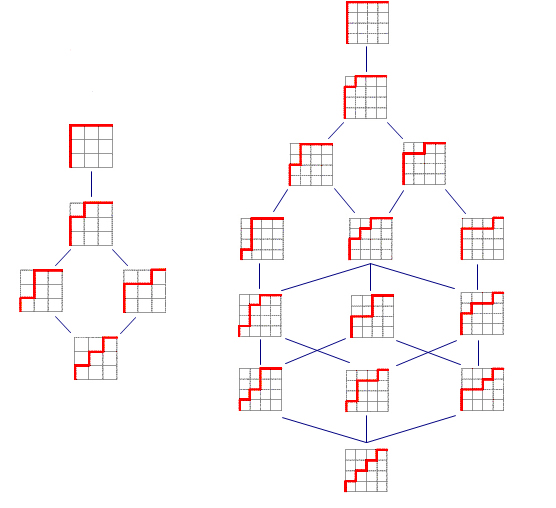}
	\caption{Hasse diagrams for $D_3$ (left) and $D_4$ (right)}
	\label{Dyck path poset for n = 3 and 4}
\end{figure} 
\end{example}

The purpose of this paper is to investigate some of the properties of the poset of Dyck paths ordered by inclusion.  Since this sequence is well-known as a way to count the total number of Dyck paths of order $n$, we begin by looking at a combinatorial explanation why the $C_n$ formula given above enumerates Dyck paths and then consider the recurrence relation and generating function for $C_n$.

\subsection{Combinatorial justification for enumerating $D_n$ using $C_n=\frac{1}{n+1}\binom{2n}{n}$}
\text{  }
\\
\\
1)  There are a total of $\binom{2n}{n}$ paths from $(0,0)$ to $(n,n)$ with north and east steps in an $n \times n$ grid, if these are the only directions in which we are permitted to travel.  We choose $n$ steps in the north direction and the remainder must travel east.  

2) Now, consider paths which cross below the line $y=x$ (i.e. not Dyck paths). There must be a first place where the path crosses below the main diagonal.  Take the portion of the path after the first `bad' step and interchange the north and east steps which is equivalent to  reflecting this portion of the path in the line $y = x - 1$ as in the figure below.

\begin{figure}[htbp]
	\centering
		\includegraphics[width=3in]{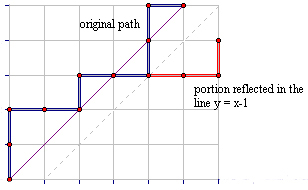}
	 \caption{Reflecting a Dyck path}
   \label{Reflecting a Dyck path}
\end{figure}

Now we have a situation where, for the $2n$ steps, $n+1$ are in one direction and $n-1$ are in the opposite direction.  In fact, this reflection maps the point $(x, y)$ to the point $(x',y')=(y+1,x-1)$ as shown geometrically below.  

\begin{figure}[htbp]
	\centering
		\includegraphics[width=3in]{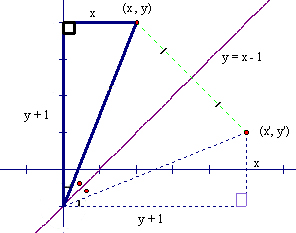}
	\caption{Reflecting in the line y = x - 1}
	\label{Reflecting in the line y = x - 1}
\end{figure}

The overall effect then is to take $(n,n) \rightarrow (n+1,n-1)$ or to create an $(n+1)$ x $(n-1)$ grid.  On this grid there are $\binom{2n}{n-1}$ ways of choosing a path. Since each path that crosses the diagonal can be uniquely transformed like this, there is a 1:1 correspondence.  So, the total number of such paths is given by $\binom{2n}{n-1}$.\newline
Thus the total number of Dyck paths, i.e. those which lie either entirely above or touch, but don't cross the $x$-axis, will be equal to the total number of paths $(=\binom{2n}{n})$ minus the number of paths which cross below the $x$-axis $(=\binom{2n}{n-1})$. 

\begin{align*}
\binom{2n}{n} - \binom{2n}{n-1} &= \frac{(2n)!}{n!n!} - \frac{(2n)!}{(n-1)!(n+1)!}\\
&= \frac{(2n)!}{n!(n-1)!}\left(\frac{1}{n} - \frac{1}{n+1}\right)\\
&= \frac{(2n)!}{n!(n-1)!}\left(\frac{(n+1)-n}{n(n+1)}\right)\\
&= \frac{1}{n+1}\left(\frac{(2n)!}{n!n!}\right)\\
&= \frac{1}{n+1} \binom{2n}{n}\\
\end{align*}
The above establishes the following proposition:
\begin{prop}
The number of Dyck paths of order $n$ in $D_n$ are counted by the Catalan number $C_n=\frac{1}{n+1}\binom{2n}{n}$.
\end{prop}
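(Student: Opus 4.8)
The plan is to prove the count via the reflection principle (André's reflection method), establishing the Catalan formula by subtracting the "bad" paths from the total number of monotone lattice paths. First I would count all monotone lattice paths from $(0,0)$ to $(n,n)$ using only north and east steps: such a path is determined by choosing which $n$ of the $2n$ steps are north, so there are $\binom{2n}{n}$ of them in total, with no restriction relative to the diagonal. This is the easy ingredient and requires only a single binomial coefficient.

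Next I would isolate the paths that fail to be Dyck paths, namely those that at some point pass strictly below $y=x$, equivalently those that touch the line $y=x-1$. For each such path there is a well-defined \emph{first} step at which it first reaches $y=x-1$. The key construction is to reflect the portion of the path strictly after this first bad step across the line $y=x-1$, which interchanges north and east steps in that tail. Geometrically, as indicated in the figures, this reflection sends the point $(x,y)$ to $(y+1,x-1)$ and in particular carries the endpoint $(n,n)$ to $(n+1,n-1)$, so the reflected object is a monotone lattice path from $(0,0)$ to $(n+1,n-1)$.

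The crux of the argument, and the step I expect to be the main obstacle, is verifying that this reflection is a genuine bijection between the bad paths ending at $(n,n)$ and \emph{all} monotone paths ending at $(n+1,n-1)$. For surjectivity and injectivity I would argue that every path to $(n+1,n-1)$ is forced to touch $y=x-1$ (since the terminal inequality $n-1<n+1$ guarantees a crossing), locate its own first contact with that line, and reflect the tail back across $y=x-1$; I would then check that this inverse reflection returns a path to $(n,n)$ and that the two operations are mutually inverse, so the correspondence is exactly one-to-one. Counting the target paths gives $\binom{2n}{n-1}$ bad paths. Care is needed here to confirm that "first contact with $y=x-1$" is preserved under the reflection so that the map is well defined in both directions.

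Finally I would subtract and simplify: the number of Dyck paths is $\binom{2n}{n}-\binom{2n}{n-1}$, and routine manipulation of factorials, namely factoring out the common term $\tfrac{(2n)!}{n!(n-1)!}$ and combining $\tfrac1n-\tfrac1{n+1}=\tfrac{1}{n(n+1)}$, collapses this expression to $\tfrac{1}{n+1}\binom{2n}{n}=C_n$, which establishes the proposition. As an alternative to the reflection argument one could instead invoke the cycle lemma to count the bad paths, but the reflection approach is the most direct given the geometric setup already in place.
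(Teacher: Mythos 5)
Your proposal is correct and follows essentially the same route as the paper: counting all $\binom{2n}{n}$ monotone paths, reflecting the tail after the first bad step across $y=x-1$ via $(x,y)\mapsto(y+1,x-1)$ to count the $\binom{2n}{n-1}$ bad paths, and simplifying the difference to $\frac{1}{n+1}\binom{2n}{n}$. If anything, you are more careful than the paper, which asserts the one-to-one correspondence without spelling out the inverse map and the mutual-inverse check that you rightly flag as the crux.
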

\text{ }
\\

\subsection{Recurrence relation and rotational transformation}
\text{  }
\\
\\
We know that the Catalan numbers $1,1,2,5,14,42,\ldots$ are defined:  $C_n=\frac{1}{n+1}\binom{2n}{n}$ and have established that this sequence counts the number of Dyck paths in the $D_n$ poset.
Consider the following recurrence relation:
\begin{equation*}
E_n=\sum_{k=1}^n E_{k-1}E_{n-k}\qquad \text{where } E_0 = E_1 = 1 
\end{equation*}
We will show that the total number of Dyck paths of order $n$, $\vert D_n\vert$, can also be enumerated using this recurrence relation and hence that $E_n$ is equivalent to the Catalan number, $C_n$.\newline 
\newpage
The table below lists the first five numbers calculated using this recurrence relation.  Note that these correspond to the Catalan numbers for $n=0$ to $5$.

\begin{tabular}[h]{l c r}
\scriptsize
n &  & \scriptsize $E_n$\\
\hline

\scriptsize 0 & \scriptsize$E_0 \text{ (by definition)}$ & \scriptsize1\\
\scriptsize 1 & \scriptsize$E_0E_0 = (1)(1) = 1$ & \scriptsize1\\
\scriptsize 2 & \scriptsize$E_0E_1 + E_1E_0 = (1)(1) + (1)(1) = 2$ & \scriptsize2\\
\scriptsize 3 & \scriptsize$E_0E_2 + E_1E_1 + E_2E_0= (1)(2) + (1)(1) + (2)(1)= 5$ & \scriptsize 5\\
\scriptsize 4 & \scriptsize$E_0E_3 + E_1E_2 + E_2E_1 + E_3E_0= (1)(5) + (1)(2) + (2)(1) + (5)(1)= 14$ & \scriptsize14\\
\scriptsize 5 & \scriptsize$E_0E_4 + E_1E_3 + E_2E_2 + E_3E_1 + E_4E_0= (1)(14) + (1)(5) + (2)(2) + (5)(1) + (14)(1) = 42$ & \scriptsize42 

\end{tabular}
\text{  }
\\

We now show that $E_n = C_n$ for all $n$ by providing a combinatorial explanation as to why this recurrence counts the number of Dyck paths.
It is easy to see that there is a bijection between Dyck paths and lattice paths starting at $(0,0)$ and ending at $(2n, 0)$ which never cross below the $x$-axis and consisting only of steps that are either NE $(1,1)$ or SE $(1,-1)$ since this would just involve rotating the Dyck path and the line $y=x$ by $45^\circ$  in a clockwise direction.
In this model, it is clear that the first step must be $(1,1)$ and the last step must be $(1, -1)$.  
The total number of steps $= 2n = n$ steps NE and $n$ steps SE.  We will call this a path of length (size) $n$.
For example, the poset $D_4$ is composed of paths of length 4 which have 4 steps NE and 4 steps SE.
If the path starts at $(0,0)$, it must come back down to the $x$-axis at some point.  We will consider $k$ to represent the SE step in which the path first reaches the $x$-axis.\newline
\begin{figure}[h]
	\centering
		\includegraphics[width=4.4in]{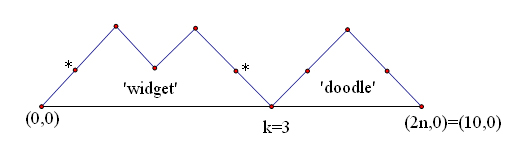}
		\caption{A Dyck path of length 5}
	\label{A possible Dyck path}
\end{figure}
\\
Note: we will use the term `widget' to denote the portion of the Dyck path from $(0,0)$ to $k$, and the term `doodle' to indicate the remainder of the Dyck path beyond $k$.\newline
In the diagram above we have a path of length 5 (i.e. $n = 5$) since there are 5 steps NE and 5 steps SE.  Here it first reaches the $x$-axis on the third SE step, so $k = 3$.  That means that there is a path of length 2 (= 2 SE steps) between the asterisks in the diagram above.  We know that the first step on the path must be in a NE direction and the last segment must be SE since there is only one way to do these, so in the widget we need only consider the path between the asterisks. For any Dyck path, the length of the portion between the asterisks will always be $k-1$.  Beyond $k$, there are $n-k$ NE and $n-k$ SE steps, for a path of length $n-k$ in the doodle.  By the multiplication principle, the number of paths for each $k$ will be equal to $E_{k-1}E_{n-k}$.  We note that $E_{k-1} E_{n-k}$ is equal to the number of pairs $(x, y )$ where $x$ is a widget of size $k - 1$ and $y$ is a doodle of size $n - k$ such that the size of the widget plus the size of the doodle is equal to $n-1$.
These correspond to the combinations in the centre column of the chart above, for example in the row where $n=5$, the pairs $(0,4), (1,3), (2,2), (3,1), (4,0)$ all sum to four.
However, the path can first reach the axis either when $k=1$ or when $k=2$, or \ldots, or when $k=n$ since the $k^{th}$ SE step can occur anywhere from the first to the $n^{th}$ SE step.  Since these cases are disjoint, we apply the addition principle to combine them.  So we consider (set of paths for $k=1$) + (set of paths for $k=2$) + \ldots + (set of paths for $k=n$). These will partition the total number of paths.\newline
The above discussion establishes the following proposition.
\begin{prop}
If we consider $E_0=1$ to be the empty path (since there is one way to do this), then the total number of paths, $\vert D_n \vert$, from $(0,0)$ to $(2n,0)$ is given by:
\begin{equation}
E_n=\sum_{k=1}^n E_{k-1}E_{n-k} \label{rrcomb}
\end{equation}
\end{prop}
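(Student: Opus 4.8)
The plan is to prove the recurrence by the \emph{first-return decomposition} foreshadowed above, realized as an explicit bijection. I would work throughout in the rotated model of paths from $(0,0)$ to $(2n,0)$ built from NE and SE steps that never drop below the $x$-axis, which is legitimate because the $45^\circ$ rotation already supplies a bijection with the Dyck paths of $D_n$. Fix a nonempty such path $P$ of size $n$. Its first step is forced to be NE, so $P$ leaves the axis immediately, and since it terminates on the axis it must come back; letting $k$ denote the SE step on which $P$ \emph{first} returns to the $x$-axis gives a well-defined index $1 \le k \le n$ for every nonempty path.

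Next I would make the decomposition map precise and check it is well defined. Split $P$ at the first return into the widget (the initial arch, through step $k$) and the doodle (the remainder). The doodle runs from height $0$ back to height $0$ without going below the axis, so after translation it is exactly a path of size $n-k$, and there are $E_{n-k}$ of these. The widget has a forced opening NE step and a forced closing SE step, and between them a portion that stays strictly above the axis until the return; shifting that inner portion down one unit produces a path of size $k-1$ that stays weakly above the axis, of which there are $E_{k-1}$. This is precisely the claim that the segment ``between the asterisks'' has length $k-1$.

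To upgrade this counting into a genuine bijection I would exhibit the inverse: from a pair $(x,y)$ with $x$ a widget of size $k-1$ and $y$ a doodle of size $n-k$, prepend a NE step to the shifted-up copy of $x$, append a SE step, and then concatenate with $y$. One checks that this rebuilds a path of size $n$ whose first return is exactly at step $k$, so the two maps are mutually inverse. Since $k$ is uniquely determined by $P$, the cases for different $k$ are disjoint; the addition principle combines them while the multiplication principle handles each fixed $k$, yielding $E_n = \sum_{k=1}^{n} E_{k-1} E_{n-k}$, with the convention $E_0 = 1$ for the empty path absorbing the extreme contributions.

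I expect the main obstacle to be the honest verification that this is a bijection rather than a counting slogan: one must confirm that the strict-positivity constraint on the interior of the widget corresponds \emph{exactly} to weak positivity after the downward shift (so that the inner piece ranges over arbitrary paths of size $k-1$), and that the reconstruction really produces first return at step $k$ and not earlier (so that distinct values of $k$ give disjoint fibers). Once these two points are settled, the recurrence is immediate.
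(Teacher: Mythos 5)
Your proposal is correct and follows essentially the same route as the paper: the first-return decomposition in the rotated NE/SE model, splitting each path at the $k$th SE step into a widget counted by $E_{k-1}$ (after removing the forced first and last steps) and a doodle counted by $E_{n-k}$, then combining via the multiplication and addition principles. Your explicit verification of the inverse map and of the strict-versus-weak positivity correspondence under the unit shift makes rigorous exactly the counting argument the paper gives informally.
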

\begin{remark}
Since both $C_n$ and $E_n$ count the number of Dyck paths of order $n$, it follows that $E_n = C_n$, so $E_n$ is a recurrence relation for the Catalan numbers.
\end{remark}
\subsection{Generating function}
\text{  }
\\
\\
Now we use the result of the previous section to prove algebraically that $E_n=C_n$ by considering the generating function for the Catalan numbers.\newline
Given the sequence of Catalan numbers as listed in the section above, we need to find the generating function for the infinite polynomial: 
\begin{equation*}1 + q + 2q^2 + 5q^3 + 14q^4 + 42q^5 + ...
\end{equation*}
Let \begin{equation*} E(q)=\sum_{n\geq0}E_nq^n \end{equation*} be a generating function for these numbers.  
Using the recurrence relation we have established for the Catalan numbers, it is evident that
\begin{equation*}(E(q))^2 = \sum_{n\geq0} \left(\sum_{k=0}^n E_{n-k} E_k\right)q^n =  
\sum_{n \geq 0} E_{n+1} q^n.\end{equation*}
Now, if we multiply both sides of this equation by $q$ and then rearrange the terms, we develop a quadratic equation in $E(q)$.
\begin{align*}
q(E(q))^2 &= q\sum_{n \geq 0}E_{n+1}q^n\\
&= \sum_{n \geq 0}E_{n+1}q^{n+1}\\
&= \sum_{n \geq 1}E_nq^n\\
&= \sum_{n \geq 0}E_nq^n - 1\\
&= E(q) - 1 \Rightarrow q(E(q))^2 - E(q) + 1 = 0\\
\end{align*}
Solving this using the quadratic formula, we obtain
\begin{equation*}
E(q)=\frac{1 \pm \sqrt{1-4q}}{2q}.
\end{equation*}
Noting that the derivatives of the numerator $1+\sqrt{1-4q}$ are negative, and given that our sequence has only positive terms, we reject this root.\newline
Therefore, the generating function can be rewritten in the following form:
\begin{align*}
E(q)&= \sum_{n \geq 0}E_nq^n = \frac{1-\sqrt{1-4q}}{2q}.\\
\end{align*}
Next we show by direct calculation,
\begin{equation*}
E(q)=\frac{1-\sqrt{1-4q}}{2q}=\sum_{n \geq 0}\frac{1}{n+1}\binom{2n}{n}q^n.
\end{equation*}
\begin{remark}
In order for us to work with the square root in the numerator of this expression, we shall extend the binomial theorem to include cases where the exponent is not a positive integer.  In \cite{MB}, the author expands the definition of binomial coefficients, $\binom{m}{k}$, so that $m$ can be any real number.  He defines $\binom{m}{0}=1$ and $$\binom{m}{k}=\frac{m(m-1)\ldots(m-k+1)}{k!}, \qquad \hbox{if $k \geq 0$}.$$
Then, using Taylor's theorem along with these definitions, he proves that, for any real number, $m$, and non-negative integer, $k$, $$(1+x)^m = \sum_{k \geq 0}\binom{m}{k}x^k.$$ 
\end{remark}
Applying this in our expression, we first note that
\begin{align*}
\binom{\frac{1}{2}}{n}&= \frac{\frac{1}{2}(\frac{-1}{2})(\frac{-3}{2})\ldots (\frac{3-2n}{2})}{n!}\\
&= \frac{(1)(-1)(-3)\ldots(3-2n)}{2^n\cdot n!}\\
&= \frac{(-1)^{n-1}(2n-3)(2n-5) \ldots(3)(1)}{2^n \cdot n!}\\
&= \frac{(-1)^{n-1}(2n-3)!_{odd}}{2^n\cdot n!}\\
\end{align*}
 where $(2n-3)!_{odd}=(2n-3)(2n-5)\ldots(3)(1)$.

Next, again using the result from \cite{MB},
\begin{align*}
\sqrt{1-4q} &= (1-4q)^{1/2}\\ &= \sum_{n \geq 0} \binom{1/2}{n}(-4q)^n\\ &= \sum_{n \geq 0}  \frac{(-1)^{n-1}(2n-3)!_{odd}}{2^n \cdot n!} \cdot(-4q)^n\\
&= \sum_{n \geq 0}  \frac{(-1)^{n-1}(2n-3)!_{odd}}{2^n \cdot n!}\cdot  (-1)^n \cdot (2^2)^n \cdot q^n\\
&= (-1) \cdot \sum_{n \geq 0} \frac{(2n-3)!_{odd}}{n!}\cdot 2^n \cdot q^n\\
&= -\sum_{n \geq 0}\frac{(2n-3)!_{odd}\cdot n! \cdot 2^n}{n!\text{ }n!}q^n\\
&= -\sum_{n \geq 0}\frac{(2n-3)!_{odd}\cdot n \cdot 2 \cdot (2 \cdot 4 \cdot 6 \cdot \ldots \cdot 2n-2)}{n!\text{ }n!}q^n\\
&= -2 \sum_{n \geq 0} \frac{(2n-2)!}{n!(n-1)!}q^n\\
&= -2 \sum_{n \geq 0} \frac{\binom{2n-2}{n-1}}{n}q^n.
\end{align*}

Now the generating function for the Catalan numbers can be rewritten as follows:
\begin{align*}
E(q) =\frac{1-\sqrt{1-4q}}{2q} &= \frac{1+2(1+\sum_{n \geq 1}\frac{\binom{2n-2}{n-1}}{n}q^n)}{2q}\\
&= \frac{3+2\sum_{n \geq 1}\frac{\binom{2n-2}{n-1}}{n}q^n}{2q}.\\
\end{align*}

The coefficient, $E_n$, of $q^n$ occurs when $n=N+1$.  Hence,
\begin{align*}
E_n &= \frac{2\frac{\binom{2(N+1)-2}{N+1-1}}{N+1}}{2}\\
&= \frac{\binom{2n}{n}}{n+1}\\
&= \frac{1}{n+1}\binom{2n}{n}.\\
\end{align*}
\newline
Since we recognize this formula as being the one for the Catalan numbers, we have just arrived at the proposition below using a second approach:
\begin{prop}
$E_n=C_n$, where $C_n = \frac{1}{n+1}\binom{2n}{n}$ for $n \geq 0$.
\end{prop}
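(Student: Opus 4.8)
The plan is to verify the identity algebraically by computing the generating function of the sequence $(E_n)$ in closed form and then reading off its coefficients. A one-line combinatorial proof is in fact already available from the earlier results: both $E_n$ (Proposition with the recurrence) and $C_n$ count the Dyck paths of order $n$, so they must coincide. The point of this \emph{second approach}, however, is to confirm the equality by direct computation, independently of the counting argument. First I would form $E(q)=\sum_{n\geq 0}E_n q^n$ and translate the convolution recurrence $E_n=\sum_{k=1}^n E_{k-1}E_{n-k}$ into a functional equation. Squaring the series and comparing the Cauchy product against the shifted sequence gives $(E(q))^2=\sum_{n\geq 0}E_{n+1}q^n$; multiplying by $q$ and reindexing then produces the quadratic relation $q(E(q))^2-E(q)+1=0$.

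Next I would solve this quadratic for $E(q)$ by the quadratic formula, obtaining $E(q)=\frac{1\pm\sqrt{1-4q}}{2q}$, and pin down the correct branch. The sign is forced by the requirement that $E(q)$ be an honest power series: the $+$ root has numerator tending to $2$ as $q\to 0$ and hence a pole at the origin, whereas the $-$ root has numerator $\sim 2q$, so it is regular with $E(0)=1=E_0$. This singles out $E(q)=\frac{1-\sqrt{1-4q}}{2q}$.

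The remaining work, and the \emph{main obstacle}, is the coefficient extraction. I would expand $\sqrt{1-4q}=(1-4q)^{1/2}$ using Newton's generalized binomial series, which requires the extended binomial coefficient $\binom{1/2}{n}$. The delicate part is simplifying $\binom{1/2}{n}(-4)^n$: one must track the product $\tfrac12\cdot(-\tfrac12)\cdot(-\tfrac32)\cdots\tfrac{3-2n}{2}$, collect the stray powers of $-1$ and of $2$, and convert the resulting product of odd numbers into an ordinary factorial so that a central binomial coefficient becomes visible. After this bookkeeping the series for $\sqrt{1-4q}$ collapses to $-2\sum_{n\geq 0}\frac{1}{n}\binom{2n-2}{n-1}q^n$; substituting back into $E(q)$ and reindexing with $n=N+1$ yields the coefficient $\frac{1}{n+1}\binom{2n}{n}$. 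Matching this against $\sum_{n\geq 0}E_n q^n$ gives $E_n=C_n$ for every $n\geq 0$, which is precisely the claim. I expect the algebra in this final simplification, rather than any conceptual difficulty, to be where the proof demands the most care.
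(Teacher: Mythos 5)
Your proposal follows essentially the same route as the paper: translate the convolution recurrence into the quadratic $q(E(q))^2 - E(q) + 1 = 0$, select the $-$ branch, expand $\sqrt{1-4q}$ via the generalized binomial theorem, and extract the coefficient $\frac{1}{n+1}\binom{2n}{n}$. The only (minor, favorable) difference is your branch selection argument --- requiring $E(q)$ to be regular at $q=0$ with $E(0)=1$ is cleaner than the paper's appeal to the signs of the derivatives of the numerator.
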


\text{ }

\subsection{The main result}
\text{  }

When considered together, the results from the previous sections lead to the following theorem:
\begin{theorem}
The number of Dyck paths of order $n$ in the poset $D_n$,
\begin{align}
\vert D_n \vert &= C_n \\
&= \frac{1}{n+1}\binom{2n}{n}\text{ for }n \geq 0 \nonumber\\
&= \sum_{k=1}^n C_{k-1}C_{n-k} \text{ where } C_0=C_1=1.\nonumber\\
\nonumber
\end{align}
\end{theorem}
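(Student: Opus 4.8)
The plan is to assemble the three results already established, since each expression on the right-hand side has effectively been handled in a preceding subsection; the theorem is a synthesis rather than a fresh argument. First I would invoke the reflection argument: sending each non-Dyck lattice path to a path on the $(n+1)\times(n-1)$ grid produces the count $\binom{2n}{n}-\binom{2n}{n-1}$, and the displayed algebraic simplification collapses this to $\frac{1}{n+1}\binom{2n}{n}$. This single step delivers the first two lines at once, namely $\vert D_n\vert = C_n = \frac{1}{n+1}\binom{2n}{n}$ for all $n\geq 0$.

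For the third line I would combine the widget/doodle decomposition with the generating-function identity. Partitioning the rotated paths according to the step $k$ at which the path first returns to the $x$-axis shows that $\vert D_n\vert = \sum_{k=1}^n E_{k-1}E_{n-k}$ with $E_0 = E_1 = 1$. The generating-function computation—passing through the closed form $E(q)=\frac{1-\sqrt{1-4q}}{2q}$ and the extended binomial theorem—then pins down $E_n = \frac{1}{n+1}\binom{2n}{n} = C_n$ for every $n$. Substituting $C$ for $E$ in the recurrence yields $C_n = \sum_{k=1}^n C_{k-1}C_{n-k}$ with $C_0 = C_1 = 1$, which is the last line.

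The only point requiring care is treating $E_n = C_n$ as an equality of sequences rather than merely two quantities that happen to enumerate Dyck paths. I would make this precise with a short induction: the base cases $E_0 = C_0 = 1$ and $E_1 = C_1 = 1$ hold by definition, while the generating-function argument identifies $E_n$ with $\frac{1}{n+1}\binom{2n}{n}$ term by term, so the sequences agree for all $n$. I expect this bookkeeping—matching the initial conditions and confirming that the index shift appearing in $(E(q))^2 = \sum_{n\geq 0} E_{n+1}q^n$ is consistent with the recurrence—to be the main (and essentially the only) obstacle; everything else is a direct appeal to the preceding propositions.
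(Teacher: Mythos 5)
Your proposal is correct and takes essentially the same route as the paper, which states this theorem precisely as the synthesis of its three preceding propositions: the reflection argument giving $\vert D_n\vert = \frac{1}{n+1}\binom{2n}{n}$, the widget/doodle decomposition giving the recurrence, and the generating-function computation identifying $E_n = C_n$. Your added care about $E_n = C_n$ being a term-by-term equality of sequences is sound but already delivered by the generating-function calculation itself, which extracts the coefficient $E_n = \frac{1}{n+1}\binom{2n}{n}$ directly for every $n$.
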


\newpage
\section{$q$-analogs for the Catalan numbers}
\subsection{Classical $q$-analogs}
\text{  }
\\
\\
Next we consider $q$-analogs for the Catalan numbers.  These enable us to preserve more information about $D_n$ within the sequences that enumerate the poset.  We begin with the three classical $q$-analogs, $C_n^{area}(q)$, $C_n^{inv}(q)$   and $C_n^{maj}(q)$.
\begin{definition}
The area of a Dyck path, $area(D)$, is the number of complete lattice cells lying between the Dyck path and the main diagonal.
\end{definition}
\begin{definition}
If $D$ is a Dyck path of length $n$, then the area q-analog of the Catalan numbers,
$$C_n^{area}(q)=\sum_{D \in D_n}q^{area(D)}$$
\end{definition}
This leads to a polynomial in $q$ in which the Dyck path count is decomposed into terms where the degree of the exponent indicates the area of the Dyck path(s) and the coefficient in front specifies the number of Dyck paths covering that particular area.  
\begin{example}
Consider the five paths in $D_3$.  Since there is one path with $area=0$, two paths with $area=1$, one path with $area=2$ and one path with $area=3$, we compute
$$C_3^{area}(q)= 1 + 2q + q^2 + q^3$$ and note that by setting $q=1$ we obtain $C_3^{area}(1)=5=C_3$.
\end{example}
\begin{figure}[htbp]
	\centering
		\includegraphics[width=5in]{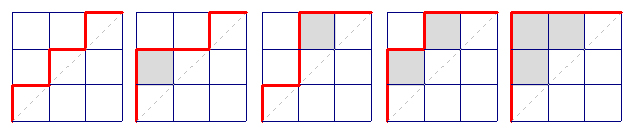}
	\caption{Area under Dyck paths of length 3}
	\label{Area under Dyck paths of length 3}
\end{figure}

Closely related to the area $q$-analog is the inversion $q$-analog.  
\begin{definition} We define inversion(D) or $inv(D)$ to be the area within the grid that lies above the Dyck path. 
\end{definition}
Since the $\binom{n}{2}$ lattice cells above the main diagonal in an $n \times n$ grid must either lie above the Dyck path or below it, and recalling that $area(D)$ counts the cells lying below the Dyck path,  it follows that $inv(D) = \binom{n}{2} - area(D)$. Using this relationship we can now define an inversion $q$-analog of the Catalan numbers:\newline
\begin{align}
C_n^{inv}(q) &= \sum_{D \in D_n} q^{inv(D)}\nonumber\\
&= \sum_{D \in D_n} q^{\binom{n}{2}-area(D)}\nonumber\\
&= q^{\binom{n}{2}}\sum_{D \in D_n}(1/q)^{area(D)}\nonumber\\
&= q^{\binom{n}{2}}C_n^{area}(1/q)\label{cninvarea}\\
\nonumber \end{align}
\begin{example}
For the Dyck paths of length 3, there is one path with $inv(D)=0$, one with $inv(D)=1$, one with $inv(D)=3$, and two paths with $inv(D)=2$, the corresponding $q$-analog, $C_3^{inv}(q) = 1 + q + 2q^2 + q^3$. \newline
\begin{figure}[htbp]
	\centering
		\includegraphics[width=5in]{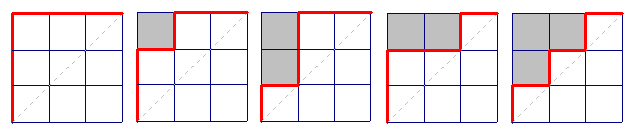}
	\caption{Inversion $q$-analog for Dyck paths of length 3}
	\label{  }
\end{figure}
\end{example}
\begin{remark} As with other classical $q$-analogs of the Catalan numbers, if we set $q=1$ then we recover $C_n$.
\end{remark}

Now, suppose that we number consecutive segments of a Dyck path $1, 2, \ldots, 2n$ starting with the initial north step up from $(0,0)$.
\begin{definition}
Given such a numbering, the major index of a Dyck path, $maj(D)$, is equal to the sum of the integers on any east steps that immediately precede a north step along the path, $D$.
\end{definition}
\begin{definition}
If $D$ is a Dyck path of length $n$, then the major index $q$-analog of the Catalan numbers,
$$C_n^{maj}(q)=\sum_{D \in D_n} q^{maj(D)}.$$
\end{definition}
As with the area $q$-analog the exponents on this polynomial group the Dyck paths, this time according to their major indices, while the coefficients indicate the number of Dyck paths in each group.\newline
\begin{example}
Again examining the five paths in $D_3$, we compute the major index for each path and construct $$C_n^{maj}(q)= 1 + q^2 + q^3 + q^4 + q^6.$$
\end{example} 
\begin{figure}[htbp]
	\centering
		\includegraphics[width=5in]{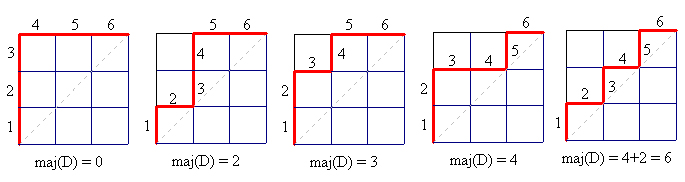}
	\caption{Major index for Dyck paths of length 3}
	\label{Major index for Dyck paths of order 3}
\end{figure}

Notice once again that if we set $q=1$ in the polynomial, we recover $C_3 = 5$.  Furthermore, as proved in \cite{Mac} there is an explicit formula for computing $C_n^{maj}$ involving the $q$-binomial numbers.  If we define 
\begin{align*}
[n]_q &= \frac{1-q^n}{1-q} = 1 + q + q^2 + \ldots + q^{n-1}, \text{ for } n \geq 0\\
\text{ }\\
\text {and }\qquad\begin{bmatrix} n\\k \end{bmatrix}_q &= \frac{[n]!}{[k]![n-k]!}\\
\text{ }\\
\text {where }\qquad[n]! &= [n][n-1][n-2] \ldots [2][1]\\
\end{align*}
then 
\begin{equation}
\label{cnmaj} C_n^{maj}(q) = \frac{1}{[n+1]_q}\begin{bmatrix}2n\\n \end{bmatrix}_q.\\
\end{equation}

\subsection{Bivariate Catalan sequences}
\text{  }
\\
\\
Remarkably the two $q$-analogs $C_n^{area}(q)$ and $C_n^{maj}(q)$ are related to one another.  In order to demonstrate this relationship, we need to define the bivariate Catalan sequence $C_n(q,t)$.  Stemming from their work with symmetric polynomials and representation theory, Garsia and Haiman introduced the $C_n(q,t)$ sequence along with a formula (see \cite{AGMH} for a proof)
\begin{equation}
C_n(q,t)=\sum_{\mu \vdash n}\frac{t^{2\sum l}q^{2\sum a}(1-t)(1-q)\prod^{0,0}(1-q^{a'}t^{l'})\sum q^{a'}t^{l'}}{\prod (q^a - t^{l+1})(t^l-q^{a+1})}\label{bigformula}
\end{equation}
which is a sum over all partitions of $n$.  Within the formula, sums and products are over all cells in the partition $\mu$ and for each cell $c \in \mu$, the variables $l, l', a, a'$ indicate the number of cells that lie strictly above, below, to the right of and to the left of $c$.  In order that the product in the numerator does not equal zero, the cell with $a'(c)=l'(c)=0$ must be omitted from the calculation.  This is denoted by the symbol $\prod^{0,0}$. \newline

\begin{example}
Consider the partitions of three.  The corresponding values of $a, a', l,$ and $l'$ for each of the cells in the  partitions are indicated in the figure below.
\begin{figure}[htbp]
	\centering
		\includegraphics[width=3.2in]{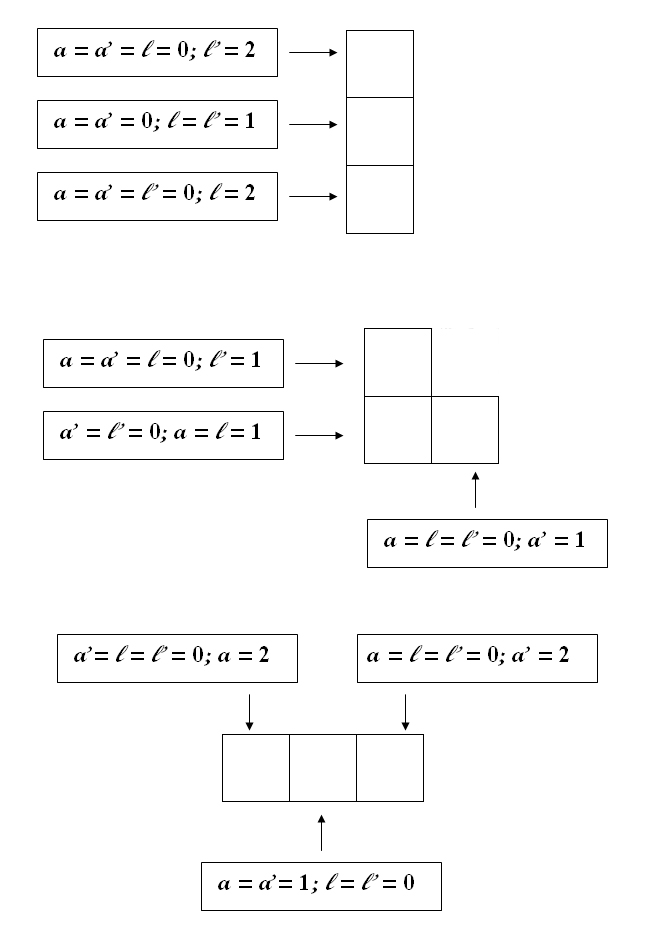}
	\caption{Partitions, $\mu$, for $n=3$}
	\label{fig:ptnsof3}
\end{figure}
\end{example}
When these values are substituted into equation \eqref{bigformula}, we obtain
\begin{align*}
C_3(q,t)&=\frac{t^6q^0(1-t)(1-q)(1-t)(1-t^2)(1+t+t^2)}{(1-t^3)(t^2-q)(1-t^2)(t-q)(1-t)(1-q)}\\
& + \frac{t^2q^2(1-t)(1-q)(1-t)(1-q)(1+t+q)}{(q-t^2)(t-q^2)(1-t)(1-q)(1-t)(1-q)}\\
& + \frac{t^0q^6(1-t)(1-q)(1-q)(1-q^2)(1+q+q^2)}{(q^2-t)(1-q^3)(q-t)(1-q^2)(1-t)(1-q)}.\\
\end{align*}
While this sum may appear to be just a messy rational expression in $q$ and $t$, it turns out that it simplifies to the polynomial $q^3 + tq^2 + t^3 + qt + qt^2$, in which the Catalan number, $C_3$ can be recovered by setting $q = t = 1$.  In addition, with this example, we see that
\begin{align*}
C_3(q,1)&= q^3 + q^2 + 1 + q + q\\
& = 1 + 2q + q^2 + q^3\\
& = C_3^{area}(q)\\
\end{align*}
and 
\begin{align*}
q^{\binom{3}{2}}C_3(q,1/q)&= q^3(q^3 + (1/q)(q^2) + (1/q)^3 + q(1/q) + q(1/q)^2)\\
& = q^3(q^3 + q + (1/q)^3 + 1 + (1/q))\\
& = 1 + q^2 + q^3 + q^4 + q^6\\
& = C_3^{maj}(q).\\
\end{align*}

Garsia and Haiman were able to prove several specializations of equation \eqref{bigformula} (for all $n$) and by considering these results it becomes evident why they named $C_n(q,t)$ the $q,t$-Catalan sequence.  Note how the $q$-analogs discussed earlier can both be derived from $C_n(q,t)$. (See \cite{AGMH} for proofs of the following equalities.)
\begin{align*}
C_n(1,1)&=\frac{1}{n+1}\binom{2n}{n} = C_n\\
q^{\binom{n}{2}}C_n(q,1/q) &= \frac{1}{[n+1]_q}\begin{bmatrix}2n\\n \end{bmatrix}_q = \sum_{D \in D_n}q^{maj(D)}= C_n^{maj}(q)\\
C_n(q,1)&=\sum_{D \in D_n}q^{area(D)}=C_n^{area}(q)\\
\end{align*}

In equation \eqref{bigformula} it is clear that $C_n(q,t)$ is symmetric, that is we are able to interchange the variables $q$ and $t$ and still end up with the same result.  However, despite the fact that $C_n(q,t)= C_n(t,q)$ is straightforward to show algebraically, finding a combinatorial proof remains an open problem \cite{NL3}.\newline
\text{  }
\\
When enumerating Dyck paths, the $q,t$-Catalan sequences have the form
\begin{equation}
C_n(q,t)=\sum_{D \in D_n} q^{area(D)} t^{tstat(D)}\label{qareatstat}
\end{equation}
where the exponent on $q$ is the area statistic discussed previously and the exponent on $t$ is a $t$-statistic.  Two different combinatorial interpretations have been proposed for the $t$ statistic.  The first of these, called the bounce statistic, was introduced by Haglund \cite{JH}.
\begin{definition} 
Let $D$ be a Dyck path of order $n$.  Define a bounce path (derived from $D$) as a sequence of alternating west and south steps beginning at $(n,n)$ and proceeding to $(0,0)$ according to the following steps:  Starting at $(n,n)$, travel west until the last north segment of the Dyck path is reached. Then travel south until the main diagonal (the line $y=x$) is hit.  Travel west from this point until the Dyck path is met again, and then south to the main diagonal. Repeat this step as many times as necessary in order that the path terminates at $(0,0)$.
\end{definition}
\begin{definition} Given a bounce path derived from $D$ as defined above, assign the numbers, $k=0,1,2,\ldots,n$ to the lattice points $(k,k)$ along $y=x$. Then the bounce statistic, $bounce(D)$, is equal to the sum of the $k$ values corresponding to points where the bounce path meets the line $y=x$, excluding the starting point $(n,n)$.
\end{definition}
\begin{example}
A Dyck path, $D$ is shown in the figure below, along with its derived bounce path.  Since the bounce path meets the main diagonal at $(0,0),(1,1)$ and $(3,3)$ if we exclude the starting point, then $bounce(D)=1+3=4$.
\begin{figure}[htbp]
	\centering
		\includegraphics[width=2.5in]{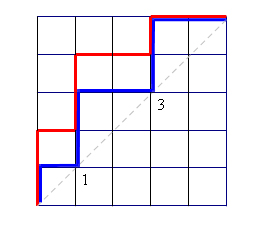}
	\caption{A Dyck path (red) and its derived bounce path (blue)}
	\label{A Dyck path and its derived bounce path}
\end{figure}
\end{example}

A second combinatorial interpretation for the $t$-statistic was suggested by Haiman, but this has been shown to be equivalent to Haglund's bounce statistic \cite{NL2}.  Therefore, we can rewrite equation \eqref{qareatstat} as
\begin{equation}
C_n(q,t)=\sum_{D \in D_n} q^{area(D)}t^{bounce(D)}.\label{qareatbounce}
\end{equation}
\text{ }

Recall that if we set $t=1$ in \eqref{qareatbounce}, then
\begin{equation}
 C_n(q,1)=\sum_{D \in D_n} q^{area(D)}.\label{qareasum}
\end{equation}
\text{ }

Now if we set $q=1$ in \eqref{qareatbounce}, then
\begin{equation}
C_n(1,t)= \sum_{D \in D_n} t^{bounce(D)}.\label{tbouncesum}\\
\end{equation}
\text{ }

Then we can set $t=q$ in \eqref{tbouncesum}, which will yield 
\begin{equation*}
C_n(1,q)=\sum_{D \in D_n} q^{bounce(D)}.
\end{equation*} 
\text{ }

Since $C_n(q,t)$ is symmetric, $C_n(q,1)=C_n(1,q)$, and we arrive at the following proposition:\newline
\begin{prop}
\begin{equation*}
C_n^{area}(q)=\sum_{D \in D_n} q^{area(D)} = \sum_{D \in D_n} q^{bounce(D)}.
\end{equation*}
\end{prop}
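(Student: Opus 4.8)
The plan is to deduce the equidistribution of the area and bounce statistics purely from the symmetry of the $q,t$-Catalan sequence, treating the two one-variable generating functions $\sum_D q^{area(D)}$ and $\sum_D q^{bounce(D)}$ as opposite specializations of the single bivariate object $C_n(q,t)$. The observation that makes this possible is equation \eqref{qareatbounce}, $C_n(q,t)=\sum_{D\in D_n} q^{area(D)}t^{bounce(D)}$, which records area and bounce simultaneously, so that killing one of the two variables isolates the distribution of the other.

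First I would extract the area side. Setting $t=1$ in \eqref{qareatbounce} collapses the bounce contribution and yields $C_n(q,1)=\sum_{D\in D_n} q^{area(D)}$, which is exactly \eqref{qareasum}; by the definition of the area $q$-analog this is precisely $C_n^{area}(q)$. Next I would extract the bounce side. Setting $q=1$ in \eqref{qareatbounce} gives \eqref{tbouncesum}, namely $C_n(1,t)=\sum_{D\in D_n} t^{bounce(D)}$, and renaming the dummy variable $t$ to $q$ produces $C_n(1,q)=\sum_{D\in D_n} q^{bounce(D)}$. At this stage the two sums appearing in the proposition have been identified as the specializations $C_n(q,1)$ and $C_n(1,q)$ respectively.

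The final step is to join these two specializations. Because $C_n(q,t)$ is symmetric in its two variables — a fact that can be read directly off the formula \eqref{bigformula} and which is available to us here — we have $C_n(q,t)=C_n(t,q)$, and in particular $C_n(q,1)=C_n(1,q)$. Chaining the three identities then gives $C_n^{area}(q)=C_n(q,1)=C_n(1,q)=\sum_{D\in D_n} q^{bounce(D)}$, which is the claim.

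The main obstacle is not located in this chain of substitutions, which is essentially mechanical once \eqref{qareatbounce} and the symmetry are in hand; the real content sits entirely in the symmetry $C_n(q,t)=C_n(t,q)$. Algebraically this specialization argument goes through without difficulty. What remains genuinely hard — and, as already remarked in the discussion of \eqref{bigformula}, open — is a combinatorial proof: the proposition asserts that $area$ and $bounce$ are equidistributed on $D_n$, yet the algebraic route yields no explicit bijection $D_n\to D_n$ carrying one statistic to the other. I would therefore present the result as an algebraic specialization argument and flag that no statistic-preserving bijection is produced by this method.
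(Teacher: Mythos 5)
Your argument is correct and matches the paper's own derivation essentially line for line: specialize \eqref{qareatbounce} at $t=1$ to get $C_n(q,1)=C_n^{area}(q)$, at $q=1$ (with the dummy variable renamed) to get $C_n(1,q)=\sum_{D\in D_n}q^{bounce(D)}$, and conclude via the symmetry $C_n(q,t)=C_n(t,q)$. Your closing remark is also consonant with the paper, which likewise notes that the symmetry is known only algebraically and that an explicit statistic-exchanging bijection (due to Loehr) lies beyond its scope.
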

To emphasize, this means that if we specialize \eqref{qareatbounce} by setting $q$ or $t=1$, we can obtain $C_n$ whether we are summing over Dyck paths by area or by bounce. \newline 
\text{ }
\\
In fact, in \cite{NL2}, as part of the proof that the two conjectured interpretations for the $t$-statistic are equivalent, the author constructs a bijection $\phi:D_n \rightarrow D_n$ having the property that $bounce(\phi(D))=area(D)$.  However, the details of this construction and accompanying proof are beyond the scope of this paper.
\newpage
The table below lists the $q,t$-Catalan numbers for $n=0$ to $5$.  Recall that these can be computed using equation \eqref{bigformula}, where remarkably, the complicated rational functions which are produced by the equation always simplify to polynomials.
\text{  }
\\
\begin{center}
\begin{tabular}[h]{l l}

\scriptsize
n &  \qquad \qquad \qquad \qquad \qquad \scriptsize $C_n(q,t)$ \\
\hline

\scriptsize 0 & \qquad \scriptsize 1\\
\scriptsize 1 & \qquad \scriptsize 1\\
\scriptsize 2 & \qquad \scriptsize $q + t$\\
\scriptsize 3 & \qquad \scriptsize $q^3 + q^2t + qt^2 + qt + t^3$\\
\scriptsize 4 & \qquad \scriptsize $q^6 + q^5t + q^4t + q^4t^2 + q^3t + q^3t^2 + q^3t^3 + q^2t^2 + q^2t^3 +$\\
\scriptsize {  } & \qquad \scriptsize $q^2t^4 + qt^3 + qt^4 + qt^5 + t^6$\\
\scriptsize 5 & \qquad \scriptsize $q^{10} + q^9t + q^8t + q^8t^2 + q^7t + q^7t^2 + q^7t^3 + q^6t + 2q^6t^2 + q^6t^3 +$\\
\scriptsize {  } & \qquad \scriptsize $q^6t^4 + q^5t^2 + 2q^5t^3 + q^5t^4 + q^5t^5 + q^4t^2 + 2q^4t^3 + 2q^4t^4 + q^4t^5 + $\\
\scriptsize {  } & \qquad \scriptsize $q^4t^6 + q^3t^3 + 2q^3t^4 + 2q^3t^5 + q^3t^6 + q^3t^7 + q^2t^4 + q^2t^5 + 2q^2t^6 + $\\
\scriptsize {  } & \qquad \scriptsize $q^2t^7 + q^2t^8 + qt^6 + qt^7 + qt^8 + qt^9 + t^{10}$\\
\end{tabular}
\end{center}

\newpage
\section{Incidence algebra and M\"obius function}
\subsection{Incidence algebra}
\text{  }
\\
\\
Let $P$ be a finite poset and $x,y \in P$.  We consider the incidence algebra of two parameter functions on $P$ such that $f(x,y)=0$ if $x \nleq y$ and $f:P \times P \rightarrow \mathbb{R}$.  The zeta and M\"obius functions defined below are examples of these functions.  In this algebra, addition and scalar multiplication are defined on a pointwise basis and the multiplication operation is a convolution defined by:
\begin{equation*}(f*g)(x,y)=\sum_{x \leq z \leq y} f(x,z)g(z,y) \end{equation*}
But this definition is analogous to ordinary matrix multiplication.
Therefore, if we consider matrices, $M$, whose rows and columns are indexed by elements of $P$, then the incidence algebra of matrices on $P$ consists of all matrices $M$ such that $M(x,y)=0$ unless $x \leq y$ in $P$.
So, if we take two matrices $(M, N)$ from the incidence algebra of $P$, that is such that $M_{xy}=0$ and $N_{xy}=0$ if $x\nleq y$, and perform matrix multiplication it is clear that
\begin{equation*} (MN)(x,y)=\sum_{z\in P}M(x,z)N(z,y) \end{equation*}
will extend over only those values of $z$ which are contained in the interval $x\leq z \leq y$.
Note that these algebras of functions on $P$ and matrices on $P$ are isomorphic, hence they record the same information.  In general, the incidence algebra of a poset $P$, denoted $I(P)$, is an algebra with a basis $[x,y]$ where $[x,y]$ is an interval of $P$.
\begin{definition}
Let $P$ be a poset.  If $[x,y] = \lbrace z: x \leq z \leq y \text{ for } z \in P \rbrace$, then $[x,y]$ is called an interval of the poset $P$.
\end{definition}
By calculating the dimension of $I(P)$ for the Dyck path poset with $n$ values from $1$ to $5$, we found that the number of intervals in $D_n$, are as shown in the table below.
\begin{center}
\begin{tabular}[h]{l c r}

\scriptsize
n &  \scriptsize Dimension of $I(P)$ \\
\hline

\scriptsize 1 & \scriptsize 1\\
\scriptsize 2 & \scriptsize 3\\
\scriptsize 3 & \scriptsize 14\\
\scriptsize 4 & \scriptsize 84\\
\scriptsize 5 & \scriptsize 594\\
\end{tabular}
\end{center}
\text{}
\\
\noindent This sequence appears in \cite{OLEIS} as A005700, the number of paths of length $2n$ in the first octant of the $x,y$-plane which start and end at the origin.  Therefore there must be a correspondence between intervals in the Dyck path poset and paths of this type.
\newpage
\subsection{Zeta and M\"obius functions}
\text{  }
Consider specifically the element of the incidence algebra of $P$, which is defined as follows:
\begin{equation*} \zeta(x,y)=\left\{
\begin{array}{lr}
1 & \text{if }x \leq y \text{  in } P\\
0 & \text{otherwise }
\end{array} \right.
\end{equation*}
Known as the zeta function of $P$, this forms an upper triangular matrix with ones along the main diagonal since all elements are equal to themselves. It also has ones across the top row since the element at the `base' of the poset diagram is comparable and less than or equal to all other elements of the poset.   For any upper triangular matrix with ones along the main diagonal, the determinant is equal to 1.  Since the determinant is non-zero, we know that the matrix $\zeta$ is invertible.  Since $\zeta$ is invertible, an inverse made up of the product of elementary matrices exists.  But each of these elementary matrices will also be upper triangular.  Therefore the product of the elementary matrices (the inverse of $\zeta$) is also an upper triangular matrix. We call this inverse the M\"obius function of $P$ and it is denoted by $\mu (P)$.  Note that $\mu (x,x) = 1$ which follows from the upper triangularity of the matrix.
Since $\mu\zeta=\delta$, where $\delta$ denotes the identity, and the determinants of both $\zeta$ and $\delta$ are already known to be 1, it follows that the determinant of $\mu(P)$ is also equal to 1. In addition, we claim that the elements of $\mu(P)$ must be 1, 0 or -1. 
\begin{equation*} \mu\zeta=(\zeta)^{-1}\zeta=\frac{Adj(\zeta)}{\det(\zeta)}*\zeta=\delta
\end{equation*}
So, $\mu(D)= Adj(\zeta)/\det(\zeta)$.  As stated previously, the determinant of $\zeta$, is 1.  Each element in the numerator, $Adj(\zeta)$ is a determinant, which comes from the original matrix by considering deletions of rows and columns.  In deleting rows and columns from the upper triangular matrix $\zeta$ which consisted of only 1's and 0's, we will create only upper triangular matrices whose determinants will either be 1, 0 or -1 depending on which row and column are deleted. Essentially there are three cases since the intersection of the row and column is either above, on or below the main diagonal.  Thus, $\zeta^{-1}$ is determined solely by $Adj(\zeta)$.  Since these determinants form the elements of $Adj(\zeta)$, and dividing by 1 doesn't change these values, it follows that the only possible values for $\mu(P)$ are 1, 0 and -1.
\begin{definition}\label{mobius}
We define $\mu(P)$ such that: $\mu(x,x):=1$, $\mu(x,y):=0$ if $x \nleq y$, and $$\mu(x,y):=-\sum_{x \leq z < y} \mu(x,z) \qquad \text{   for   } x < y \text{ in } P.$$
\end{definition}
\text{ }
\\
Since $D_n$ is a finite poset, the above discussion and equations are relevant.
\text{ }
\\
\begin{example}Consider the poset of Dyck paths ordered by inclusion for $n = 3$.  The Hasse diagram is drawn below with the vertices representing Dyck paths and the edges showing the cover relationships.  The vertices are labelled here in order to facilitate the discussion.  
We can compute the values by hand using definition \ref{mobius}.
\begin{figure}[htbp]
	\centering
		\includegraphics[width=1.7in]{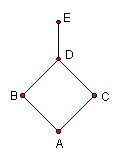}
	\caption{Dyck path poset for n=3}
	\label{Dyck path poset for n=3}
\end{figure}
For example, starting at the base of the diagram, we know that $\mu(A,A)=1$ by definition.  Then to find $\mu(A,B)$ , we use equation  where $\mu(A,A)=-\mu(A,B)$.  It follows that $\mu(A,B)=-1$.  By a similar argument, $\mu(A,C)=-1$.  Then $\mu(A,D)=-(\mu(A,A) + \mu(A,B)+\mu(A,C))=-(1+(-1)+(-1))=1$ and $\mu(A,E) =-(\mu(A,A)+\mu(A,B)+\mu(A,C)+\mu(A,D))=-(1 + (-1) + (-1) + 1) = 0$.  By a similar procedure it is possible to compute the M\"obius function value for any pair of elements in the poset.
\end{example}

We can also use Maple to quickly find the M\"obius function values for all pairs $(x, y) \in D_3$.
We enter the zeta matrix for $D_3$, where the $(i,j)$th entry is 1 if $i\leq j$ in $D_3$, and 0 otherwise, in Maple. Then, using the MatrixInverse function in Maple, we invert this matrix and obtain the M\"obius matrix for $D_3$.\newline
\begin{figure}[h]
	\centering
	\begin{equation*}
	\begin{bmatrix}
	1 & 1 & 1 & 1 & 1\\
	0 & 1 & 0 & 1 & 1\\
	0 & 0 & 1 & 1 & 1\\
	0 & 0 & 0 & 1 & 1\\
	0 & 0 & 0 & 0 & 1
	\end{bmatrix}
			\end{equation*}
			\caption{Zeta function matrix for $D_3$}
			\label{Zeta function matrix for $D_3$}
\begin{equation*}
	\begin{bmatrix}
	1 & -1 & -1 & 1 & 0\\
	0 & 1 & 0 & -1 & 0\\
	0 & 0 & 1 & -1 & 0\\
	0 & 0 & 0 & 1 & -1\\
	0 & 0 & 0 & 0 & 1
	\end{bmatrix}
			\end{equation*}			
			\caption{M\"obius function matrix for $D_3$}
	\end{figure}

%

However, both of these methods are time-consuming for larger values of $n$.  In order to find the M\"obius function value for any given pair of elements in the poset, it will be useful to have a direct formula.
\subsection{Isomorphism between $D_n$ and $J(P)$}
\text{  }
\\
\\
\noindent In order to find a formula for the M\"obius function of the poset of Dyck paths, we must first prove that the poset of Dyck paths ordered by inclusion is isomorphic to the poset of order ideals ordered by inclusion of the poset of points $(a,b)$ where $a+b \leq (n-2)$ and $a,b\geq 0$ with $(a,b) \leq (c,d)$ iff $a \geq c$ and $b \geq d$.\newline
\text{  }
\\
However, it will be helpful to recall some definitions and consider examples before proving this claim.
\text{  }
\\
Recall that a Dyck path is a lattice path in the $n \times n$ square which doesn't pass below the line $y=x$.  A walk along a Dyck path consists of $2n$ steps, with $n$ in the north direction and $n$ in the east direction.  By necessity the first step must be north and the final step must be east.
\begin{example}
Recall that there are the 14 possible Dyck paths for $n=4$ (diagram modified from \cite{webDP}).\newline

\begin{figure}[h]
	\centering
		\includegraphics[width=5in]{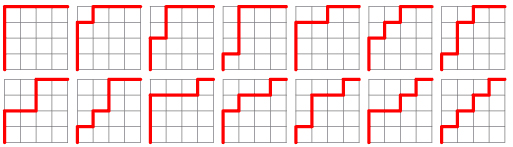}
	\caption{Dyck paths for n = 4}
\end{figure}
\end{example}
\text{  }
\\
If the Dyck paths are ordered by inclusion, i.e. one path is less than another if it lies completely below, then we form a poset.  Suppose we have two Dyck paths, $x$ and $y$ such that $x < y$. If there is no path $z$ in the poset such that $x < z <y$ then we say that $y$ covers $x$.  The poset can be displayed using a Hasse diagram in which the vertices represent the Dyck paths and the edges show the cover relationships.  Here again is the Hasse diagram for the Dyck path poset where $n=4$.\newpage
\begin{figure}[h]
	\centering
		\includegraphics[width=1.7in]{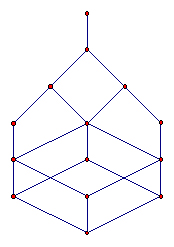}
	\caption{Dyck path poset for n = 4}
\end{figure}
\begin{example}
Consider the poset $P_n$, consisting of points $(a, b)$ where $a+b \leq (n-2)$ and $a,b\geq 0$ with $(a,b) \leq (c,d)$ iff $a \geq c$ and $b \geq d$.  If $n=4$ then $a+b \leq (n-2)$ implies that $(a+b)\leq 2$ so the points are $(0,0), (0,1), (1,0), (1,1), (0,2), (2,0)$.  Imposing the order outlined above we obtain the following diagram:
\end{example}
\begin{figure}[h]
	\centering
		\includegraphics[width=2.75in]{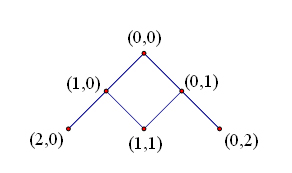}
	\caption{Poset of points $P_n=\{(a,b): a+b\leq (n-2)\}$ for $n = 4$ where $(a,b)\leq(c,d)$ iff $a \geq c$ and $b \geq d$}
	\label{ }
\end{figure}
\text{ }
\\
\begin{definition}
An order ideal (or downset) of $P$ is a subset, $I$ of $P$ such that if $x \in I$ and $y \leq x$, then $y \in I$.\end{definition}

\text{  }

It is easy to prove that the set of all order ideals ordered by inclusion forms a poset, which we shall denote $J(P)$.  It is this poset, $J(P)$, which we claim is isomorphic to the Dyck path poset.  For our example, the order ideals are:
\begin{center}
$\{ \text{ } \}$, $\{(2,0)\}$, $\{(1,1)\}$, $\{(0,2)\}$, $\{(2,0),(1,1)\}$, $\{(1,1),(0,2)\}$, $\{(2,0),(0,2)\}$, 
$\{(1,0),(2,0),(1,1)\}$, $\{(2,0),(1,1),(0,2)\}$, $\{(0,1),(1,1),(0,2)\}$, $\{(1,0),(2,0),(1,1),(0,2)\}$,   $\{(0,1),(1,1),(0,2),(2,0)\}$,  $\{(1,0),(0,1),(2,0),(1,1),(0,2)\}$, $\{(0,0),(1,0),(0,1),(2,0),(1,1),(0,2)\}$.
\end{center}

If these are ordered by inclusion, the following Hasse diagram is formed:
\begin{figure}[h]
	\centering
		\includegraphics[width=3.5in]{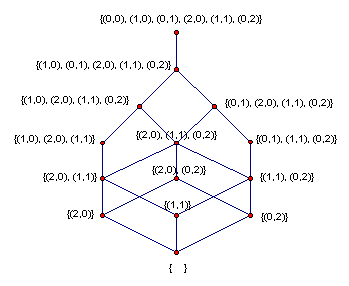}
	\caption{Order ideals poset for n = 4}
\end{figure}
\text{ }
\\
We note that for our example with $n=4$, the Hasse diagrams are identical for the poset of Dyck paths and for the poset $J(P)$ of order ideals of $P_n$ where both posets are ordered by inclusion.\newline
We are now ready to prove the claim.
\begin{prop}
The poset of Dyck paths ordered by inclusion is isomorphic to the poset of order ideals ordered by inclusion of the poset of points $(a,b)$ where $a+b \leq (n-2)$ and $a,b\geq 0$ with $(a,b) \leq (c,d)$ iff $a \geq c$ and $b \geq d$.
\end{prop}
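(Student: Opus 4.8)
The plan is to exhibit an explicit order-preserving bijection between the two posets by matching each Dyck path with the region it encloses above the main diagonal. First I would set up a correspondence between the $\binom{n}{2}$ unit cells lying strictly above the line $y=x$ in the $n \times n$ grid and the $\binom{n}{2}$ points of $P_n$. Indexing a cell by its column $i$ and row $j$, so that the cell sits above the diagonal exactly when $1 \le i \le j-1 \le n-1$, I would send the cell $(i,j)$ to the point $(a,b)=(i-1,\, n-j)$. A short check shows $a,b \ge 0$ and $a+b = n-1+i-j \le n-2$, so this lands in $P_n$, and it is a bijection since both sets have $\binom{n}{2}$ elements and the map is visibly injective.

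Next I would define the candidate isomorphism $\Phi$: to a Dyck path $d$ assign the set $\Phi(d)$ of points of $P_n$ corresponding to the area cells of $d$, that is, the complete lattice cells lying between $d$ and the diagonal. The heart of the argument is to verify that $\Phi(d)$ is always an order ideal of $P_n$. Pulling the order on $P_n$ back to cells, one finds that $(i,j)$ lies below $(i',j')$ precisely when $i \ge i'$ and $j \le j'$; thus I must show that whenever an area cell $(i',j')$ of $d$ is enclosed, every cell $(i,j)$ with $i \ge i'$, $j \le j'$ that still lies above the diagonal is also enclosed. Geometrically this says that moving from an enclosed cell rightward and downward, toward the diagonal, one remains inside the region bounded by $d$ and $y=x$, and I would prove it from the monotonicity of the path (only north and east steps) together with the fact that $d$ never crosses below the diagonal.

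Having shown $\Phi$ maps into $J(P_n)$, I would argue it is a bijection by constructing its inverse: given an order ideal $I$, the cells corresponding to $I$ form a staircase region whose upper-left boundary, read off as a sequence of north and east steps, is a Dyck path $d$ with $\Phi(d)=I$; the downset property of $I$ is exactly what guarantees this boundary is a genuine monotone lattice path staying weakly above the diagonal. Finally I would check that $\Phi$ preserves and reflects the order. Since $d_1 \le d_2$ in the inclusion order means $d_1$ lies weakly below $d_2$, the area region of $d_1$ is contained in that of $d_2$, giving $\Phi(d_1) \subseteq \Phi(d_2)$, and the converse follows by reading the containment of regions backward, so $\Phi$ is an isomorphism onto $J(P_n)$ ordered by inclusion.

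The main obstacle I anticipate is the downset verification in the second step: one must treat the geometry carefully so that the ``right and down'' closure of the area region is established rigorously rather than by appeal to a picture, and one must confirm that the reversed componentwise order on $P_n$ matches the direction in which the area region grows. As a sanity check I would confirm the extremes, namely that the minimal staircase path maps to the empty ideal while the maximal path consisting of $n$ north steps followed by $n$ east steps maps to all of $P_n$, and that for $n=4$ the three area-one paths correspond to the three singleton ideals of the minimal elements $(2,0),(1,1),(0,2)$. Alternatively, one could invoke Birkhoff's representation theorem after checking that $D_n$ is a distributive lattice and identifying $P_n$ as its poset of join-irreducibles, but the explicit bijection above keeps the argument self-contained.
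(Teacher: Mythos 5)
Your proposal is correct and takes essentially the same approach as the paper: both identify the points of $P_n$ with the $\binom{n}{2}$ cells of the staircase region above the main diagonal (you via the explicit map $(i,j)\mapsto(i-1,\,n-j)$, the paper via a labelled partition diagram), match each Dyck path with the ideal consisting of its area cells, and verify the downset property, bijectivity, and order preservation using the monotonicity of north/east steps. The only cosmetic differences are that you orient the bijection from paths to ideals while the paper defines it from ideals to paths, and that you explicitly check the order is reflected as well as preserved, a point the paper leaves implicit.
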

\begin{proof}
Let $D$ be the poset of Dyck paths (ordered by inclusion) for an $n \times n$ grid.  Let $P_n$ be the poset of points $(a,b)$ where $(a+b) \leq (n-2)$ and $a,b \geq 0$ with $(a,b) \leq (c,d)$ iff $a \geq c$ and $b \geq d$.  Let $J(P)$ be the poset of order ideals of $P_n$. Consider $P_n$ as a staircase partition where each lattice point in $P_n$ corresponds to a cell in the partition.  There are $\binom{n}{2}$ lattice points in $P_n$ which will be arranged in the partition such that the top row is of size $(n-1)$ and in each row below the size is one cell smaller than in the previous row, thus creating the staircase shape.  The partition cells are labelled so that cells which lie below and to the right reflect the cover relationships from $P_n$. The diagram below illustrates the labelled partition diagram up to $n=4$.\newline
\begin{figure}[h]
	\centering
		\includegraphics[width=2.5in]{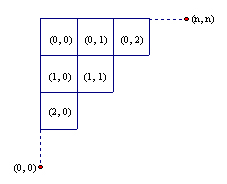}
	\caption{Staircase partition for n = 4}
	\label{Staircase partition for n = 4}
\end{figure}
\text{ }
\\
\noindent Now each order ideal from $J(P)$ will correspond to a path through the partition drawn from $(0,0)$ to $(n,n)$ using only north and east steps (making Dyck paths) and such that if a point is included in the order ideal set, then it lies below the path in the partition diagram. \newline
Let $f:J(P) \rightarrow D$ be defined as follows:  Let $I \in J(P)$ be the order ideal $\{(a_1, b_1), (a_2, b_2), \ldots, (a_k, b_k)\}$.  By the definition of order ideal, if an element $(a_i, b_i)$ is included in the set, then all $(a_j, b_j)\leq (a_i, b_i)$ must also be included in the set.  Then $f(I)$ is the path from $(0,0)$ to $(n,n)$ such that $(a_1, b_1), (a_2, b_2), \ldots,(a_k, b_k)$ are the cells which lie below the path in the partition. This path will be a Dyck path since:\newline
 a) It won't pass below the straight line joining $(0,0)$ to $(n,n)$.  The minimal possible path, corresponding to the order ideal $\{ \text{ } \}$, consists of steps following the staircase pattern $(NE)^n = NENE\ldots NE$ which will trace along the bottom and right borders of the cells $(a, b)$ where $(a+b)=(n-2)$. \newline
 b)  It won't pass outside of the partition along the left or top borders.  The maximal possible path, corresponding to the order ideal containing all elements of the poset, $P_n$, will contain all labelled cells in the partition and will trace along the left and top borders of the partition diagram, along the cells $(a, b)$ where at least one of $a,b$ is 0.\newline
 c)  The path can be constructed using only north and east steps.  All other paths corresponding to order ideals will lie above (or contain) the minimal path, and will lie below (or be contained in) the maximal path.  If a path lies above cell $(a_i, b_i)$ then it will also lie above all $(a_j, b_j)\leq (a_i, b_i)$.\newline
\text{  }
\\
Claim:\newline
(i) f is 1 to 1. \newline
(ii) f is onto. \newline
(iii) f preserves $\leq$. \newline
(Together, (i), (ii), (iii) show f is an isomorphism.)\newline
\text{   }
\\
(i) Since each point in $J(P)$ is unique, each path through the partition is unique, i.e. there is a 1:1 correspondence between the order ideals in $J(P)$ and the Dyck paths through the partition.  Suppose that we have two different order ideals, $I_1$ and $I_2$, that map to the same Dyck path.  Let $I_1=\{(a_1, b_1), (a_2, b_2), \ldots, (a_k, b_k)\}$ and $I_2 = \{(c_1, d_1), (c_2, d_2),\ldots,(c_l, d_l)\}$.  Since the order ideals are different, either $I_1-I_2$ is non-empty or $I_2-I_1$ is non-empty.  We may assume, without loss of generality, that it is  $I_1-I_2$ that is non-empty.  This implies that there must be at least one element $(a_i, b_i) \in I_1$ such that $(a_i, b_i)\notin I_2$.  But then, according to our procedure above, the path through the partition diagram which corresponds to $I_1$ will lie above the cell labelled $(a_i, b_i)$, while the path corresponding to $I_2$ will lie below this cell.  Since they cover different cells, $I_1$ and $I_2$ correspond to different Dyck paths.  Therefore we have contradicted the assumption that we can have two different order ideals leading to the same Dyck path.\newline
(ii)  Let $D$ be an arbitrary Dyck path such that the path lies above the cells labelled $(a_1, b_1), (a_2, b_2),\ldots,(a_k, b_k)$ in the partition diagram and where each cell corresponds to a lattice point in $P$ where $(a+b)\leq (n-2)$ and $a,b\geq 0$ with $(a,b)\leq(c,d)$ iff $a\geq c$ and $b \geq d$.  Since a Dyck path includes only north and east steps, if a Dyck path lies above the cell labelled $(a_i, b_i)$ in the partition, it must also lie above cells ($a_j, b_j)\leq(a_i, b_i)$ where $\leq$ is defined as in $P$.\newline
We construct the set of points $\{(a_1, b_1), (a_2, b_2),\ldots,(a_n, b_n)\}$ by including $(a_i, b_i)$ in the set iff the Dyck path travels above the cell with that label in the partition.  Since for every $(a_i, b_i)$, all elements less than or equal to $(a_i, b_i)$ are also included in the set, by definition this set forms an order ideal of $P(a,b)$.\newline
 (iii) Suppose that we have two order ideals, $I_1 = \{(a_1, b_1),(a_2, b_2),\ldots,(a_k, b_k)\}$ and $I_2 = \{(c_1, d_1), (c_2, d_2),\ldots,(c_l, d_l)\}$ such that $I_1 \leq I_2$.  This means that the elements of $I_1$ are a subset of the elements of $I_2$, i.e. $(a_1, b_1),(a_2, b_2),\ldots,(a_k, b_k) \in I_2$.  Then $I_1$ corresponds to a Dyck path that lies above the cells $(a_1, b_1),(a_2, b_2),\ldots,(a_k, b_k)$ and $I_2$ corresponds to Dyck path that lies above the cells $(c_1, d_1),(c_2, d_2),\ldots,(c_l, d_l)$.  But since $(a_1, b_1),(a_2, b_2),\ldots,(a_k, b_k) \in I_2$, then the Dyck path corresponding to $I_1$ must lie on or below the Dyck path corresponding to $I_2$ and therefore the $\leq$ relation is preserved.
\end{proof}

\subsection{Direct formula for M\"obius function}
\text{  }
\\
\\
\begin{definition} A poset, $P$, is a lattice if every pair of elements $(x,y)\in P$ have a greatest lower bound (or meet) and a least upper bound (or join).
\end{definition}
\begin{definition} A lattice is distributive iff the meet and join operations distribute over one another.
\end{definition}
\begin{cor} The poset of Dyck paths is a finite distributive lattice.
\end{cor}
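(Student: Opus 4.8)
The plan is to leverage the isomorphism established in the previous proposition, which identifies the Dyck path poset $D_n$ with $J(P)$, the poset of order ideals of $P_n$ ordered by inclusion. Since a poset isomorphism carries greatest lower bounds to greatest lower bounds and least upper bounds to least upper bounds, every lattice-theoretic property --- including being a lattice at all and being distributive --- transfers across the isomorphism. Thus it suffices to prove the statement for $J(P)$ in place of $D_n$. This is an instance of the fundamental theorem of finite distributive lattices (Birkhoff's representation theorem), and the argument is self-contained.

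First I would show $J(P)$ is a lattice by exhibiting the meet and join explicitly. Given two order ideals $I_1, I_2 \in J(P)$, I claim the meet is $I_1 \cap I_2$ and the join is $I_1 \cup I_2$. The key preliminary observation is that both the intersection and the union of two order ideals are again order ideals: if $x$ belongs to $I_1 \cap I_2$ (respectively $I_1 \cup I_2$) and $y \leq x$ in $P_n$, then $y$ lies in each $I_i$ that contains $x$, and hence in the intersection (respectively the union). Ordering $J(P)$ by inclusion, $I_1 \cap I_2$ is contained in both $I_1$ and $I_2$, and any order ideal contained in both is contained in their intersection, so $I_1 \cap I_2$ is the greatest lower bound; the dual argument shows $I_1 \cup I_2$ is the least upper bound. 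Finiteness is immediate, since $P_n$ has $\binom{n}{2}$ points and therefore only finitely many subsets, hence finitely many order ideals.

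With meet and join identified as set intersection and union, distributivity follows at once from the distributive laws for sets, namely $A \cap (B \cup C) = (A \cap B) \cup (A \cap C)$ and $A \cup (B \cap C) = (A \cup B) \cap (A \cup C)$, applied to order ideals. I expect the only real content --- the step deserving genuine care rather than routine verification --- to be the closure claim that unions and intersections of order ideals remain order ideals; once that is in hand, the meet/join identification and the distributive laws are essentially bookkeeping. Transporting everything back through the isomorphism $D_n \cong J(P)$ then yields that the Dyck path poset is a finite distributive lattice.
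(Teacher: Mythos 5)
Your proof is correct and takes essentially the same route as the paper: both deduce the corollary from the isomorphism $D_n \cong J(P)$ together with the fact that posets of order ideals are finite distributive lattices. The only difference is that the paper simply cites Stanley for that fact, while you prove it from scratch (order ideals are closed under union and intersection, these give the join and meet, and distributivity follows from the set-theoretic laws) --- a correct, self-contained unpacking of the citation, and your closure argument, the one step needing care, is right.
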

\begin{proof}
Since the poset of Dyck paths, $D_n$, is isomorphic to $J(P)$ and posets of order ideals are always finite distributive lattices \cite{RS1}, then $D_n$ is also a finite distributive lattice.
\end{proof}
\text{  }
\\
\begin{definition}
An antichain is a subset, $A$ of a poset, $P$, such that no two distinct elements of $A$ are comparable.
\end{definition}

\noindent We shall examine the antichains of $D_n$ in detail in a later section of this paper.  However, in \cite{RS1}, Stanley notes that if $L=J(P)$ is a finite distributive lattice consisting of order ideals $I,\ldots, I'$, then M\"obius function of the interval $[I,I']$ of $L$, 
\begin{equation*} \mu(I,I')= \left\{
\begin{array}{rl}
(-1)^{\mid{I'-I}\mid}, &\text{if $I'-I$ is an antichain of $P$}\\
 0, &\text{otherwise.}
 \end{array} \right.
 \end{equation*}
\text{  }
\\
Since we have proved that there is an isomorphism between the poset of Dyck paths, $D_n$, and $J(P)$, it follows that the M\"obius function for $D_n$ is the one given above.
\text{  }
\\
\\
\begin{definition}
Stacked cells are a pair of cells, $x$ and $y$, such that cell $x$ lies immediately below or to the right of cell $y$ iff $y$ covers $x$ in the corresponding poset, $P$.
\end{definition}
\text{ }
\begin{prop}
There are stacked cells in the set $I'-I$ iff $I'-I$ is not an antichain of $P$.
\end{prop}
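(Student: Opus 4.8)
The plan is to prove the two implications separately, using throughout the fact (directly from the definition of stacked cells) that a pair of stacked cells in the staircase is precisely a pair $x,y$ of elements of $P$ for which $y$ covers $x$; in particular, any such pair is comparable in $P$.

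For the forward implication I would argue as follows. Suppose $I'-I$ contains a pair of stacked cells $x$ and $y$. Then $y$ covers $x$, so $x<y$ and the two elements are distinct and comparable; hence $I'-I$ contains two comparable elements and therefore fails to be an antichain. This direction is immediate once the dictionary between stacked cells and covering pairs is in place.

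The backward implication carries the real content, and the key observation I would isolate first is that $I'-I$ is \emph{convex} in $P$: if $x\leq w\leq z$ with $x,z\in I'-I$, then $w\in I'-I$. To see this, recall that $I$ and $I'$ are order ideals with $I\subseteq I'$, since they bound an interval $[I,I']$. From $w\leq z$ and $z\in I'$, the downset property of $I'$ gives $w\in I'$; and from $x\leq w$ with $x\notin I$, the downset property of $I$ forces $w\notin I$ (otherwise $x\in I$). Hence $w\in I'-I$. Granting this, suppose $I'-I$ is not an antichain, so there are distinct comparable elements $x<y$ in $I'-I$. I would then choose a saturated chain $x=z_0\lessdot z_1\lessdot\cdots\lessdot z_m=y$ in $P$. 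Each $z_i$ satisfies $x\leq z_i\leq y$, so by convexity every $z_i$ lies in $I'-I$; in particular $z_0,z_1\in I'-I$ with $z_1$ covering $z_0$, which is exactly a pair of stacked cells inside $I'-I$.

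The main obstacle is recognizing and verifying the convexity of $I'-I$: once that is established, a single cover step along any saturated chain produces the required stacked pair, and the forward direction is trivial. I would also spend a moment confirming that in the staircase model a cover in $P$ genuinely corresponds to geometric adjacency (one cell immediately below or immediately to the right of the other), so that ``covering pair'' and ``stacked cells'' are interchangeable exactly as the definition asserts.
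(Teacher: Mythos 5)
Your proof is correct and follows essentially the same route as the paper's: the forward direction via the observation that a stacked pair is a covering (hence comparable) pair, and the backward direction by taking a saturated chain from $x$ up to $y$ and extracting a single cover step. The one substantive difference is to your credit: the paper's proof simply asserts that a chain $x=x_0<x_1<\cdots<x_l=y$ exists ``such that each $x_i \in I'-I$,'' with no justification, whereas you isolate and prove exactly the fact that makes this work, namely convexity of the difference set $I'-I$, deduced from $I\subseteq I'$ both being order ideals. Without that lemma the paper's step is a genuine gap --- a saturated chain in $P$ between two elements of $I'-I$ has no a priori reason to stay inside the difference set --- and your two-line downset argument ($w\leq z\in I'$ forces $w\in I'$, while $x\leq w$ with $x\notin I$ forces $w\notin I$) is precisely what closes it. Your final check that covers in $P$ correspond to geometric adjacency of cells in the staircase is consistent with the paper's definition of stacked cells, so the dictionary you rely on is exactly the one the paper intends.
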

\begin{proof}
\noindent Let $I$ and $I'$ be order ideals in $J(P)$ which correspond to elements in the Dyck path poset, $D$. We consider $I'-I$ to be the difference set consisting of only elements of $I'$ that are not also elements of $I$.\newline
($\Rightarrow$) Assume that there are stacked cells in the set $I'-I$.  Then there is at least one element, $y \in (I'-I)$ that covers an element $x \in (I'-I)$.  But a cover relationship can only exist between comparable elements.  Since $x$ and $y$ are comparable, and both $x,y \in (I'-I)$, then by the definition of antichain, $I'-I$ is not an antichain of $P$.\newline
($\Leftarrow$) Now, assume that $I'-I$ is not an antichain of $P$. Then there must be at least two different elements of $I'-I$ which are comparable, that is some $x, y \in (I'-I)$ such that $x < y$.  Elements which are comparable must belong to a chain.  Since $x < y$, then there is a chain with length $\geq 1$ such that $x=x_0<x_1<\ldots <x_l=y$ and such that each $x_i \in I'-I$. But, since there is at least one $x_1$ that covers $x$, cell $x_1$ must lie immediately above or to the left of cell $x$.  By the definition, we conclude that there are stacked cells in the set $I'-I$.\newline
\end{proof} 

\newpage
\section{Counting the number of chains in $D_n$}
\subsection{Total number of chains}
\text{  }
\\
\\
Next we look at some interesting subsets within $D_n$, beginning with the chains.
\begin{definition}
A chain, $C$, is a totally ordered subset of a poset, $P$.  Within a chain any two elements are comparable.
\end{definition}
\begin{definition}
The length of a finite chain, $l(C)$, is given by the equation $l(C)=\vert C \vert -1$, where $\vert C \vert$ represents the number of elements in $C$.
\end{definition}
\begin{example}
In the $D_3$ poset, there are 24 chains.  This total is made up of 1 chain on 0 vertices, 5 chains on 1 vertex, 9 chains on 2 vertices, 7 chains on 3 vertices and 2 chains on 4 vertices.  As an example, here is the $D_3$ poset and the 7 chains on 3 vertices:
\begin{figure}[htbp]
	\centering
		\includegraphics[width=5in]{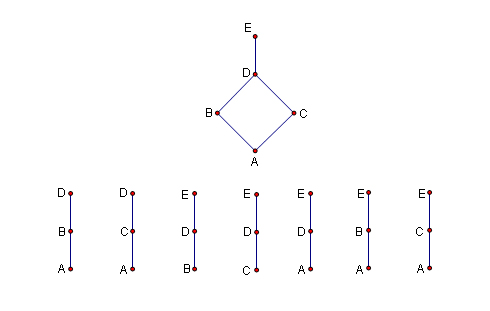}
	\caption{Chains on 3 vertices in $D_3$}
	\label{Chains on 3 vertices in $D_3$}
\end{figure}
\end{example}
\text{ }
\\
\noindent While it is possible to count the chains by hand for $D_n$ posets where $n$ is small, this strategy will prove challenging for large $n$ values.  However, as outlined in \cite{MB}  and \cite{RS1}, we can use the zeta function and some basic matrix algebra to find the total number of chains, $x=x_0<x_1<\ldots<x_k = y$, from $x$ to $y$ in the poset. Recall that the zeta function is defined as $\zeta(x,y)=1$ for all $x \leq y$ in $D_n$ and 0 otherwise. 
\text{ }
\\
\begin{definition} The identity function, denoted $\delta$, is defined such that $\delta (x,y) = 1$ if $x = y$ and $0$ otherwise.
\end {definition}
Note that either the $\zeta (x,y)$ or $\delta (x,y)$ functions may alternatively be considered as matrices whose rows and columns are indexed by the elements of $D_n$.  The proofs below will make use of both conceptualizations.\newline
\text{ } 

\begin{theorem}{(from \cite{RS1}, p. 115)}\newline
Let $x=x_0<x_1<\ldots<x_k = y$ be a chain on the interval $[x,y]$ in a poset, $P$, of $n$ elements. Then the total number of chains of length $k$ on the interval $[x, y]$ is equal to $(\zeta - \delta)^k (x, y)$. 
\end{theorem}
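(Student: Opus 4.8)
The plan is to prove the identity by induction on the chain length $k$, relying on the fact noted earlier in the paper that matrix multiplication on the incidence algebra coincides with the convolution $(f*g)(x,y)=\sum_{x\leq z\leq y}f(x,z)g(z,y)$.

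First I would record the key elementary observation that $\zeta-\delta$ is exactly the strict-order indicator matrix. Directly from the definitions, $(\zeta-\delta)(x,y)=1-0=1$ when $x<y$, while $(\zeta-\delta)(x,x)=1-1=0$, and $(\zeta-\delta)(x,y)=0-0=0$ whenever $x\nleq y$. Thus $(\zeta-\delta)(x,y)=1$ precisely when $x<y$, and is $0$ otherwise.

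For the base case $k=1$, the entry $(\zeta-\delta)(x,y)$ equals $1$ exactly when $x<y$, and in that case there is exactly one chain $x=x_0<x_1=y$ of length $1$ on $[x,y]$; otherwise both the count and the entry vanish. For the inductive step, I would assume that $(\zeta-\delta)^{k-1}(x,z)$ equals the number of chains of length $k-1$ from $x$ to $z$, and expand
\begin{equation*}
(\zeta-\delta)^k(x,y)=\sum_{z\in P}(\zeta-\delta)^{k-1}(x,z)\,(\zeta-\delta)(z,y).
\end{equation*}
By the observation above, the factor $(\zeta-\delta)(z,y)$ annihilates every term except those with $z<y$, so each surviving summand counts the chains of length $k-1$ from $x$ to a penultimate element $z$, each then extended by the single step $z<y$ to reach $y$. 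Since every chain $x=x_0<\cdots<x_k=y$ of length $k$ has a uniquely determined penultimate vertex $z=x_{k-1}$, this decomposition is a bijection between chains of length $k$ on $[x,y]$ and pairs consisting of a chain of length $k-1$ from $x$ to some $z<y$ together with the final step to $y$. Summing over all such $z$ therefore counts each length-$k$ chain exactly once, establishing the formula.

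The argument is essentially bookkeeping, so the only point requiring care — and the step I would treat as the main obstacle — is confirming that the penultimate-vertex decomposition is genuinely a bijection, so that no chain of length $k$ is missed or double-counted. This reduces to noting that $x_{k-1}$ is uniquely recoverable from the chain and that any chain of length $k-1$ ending strictly below $y$ extends uniquely to a length-$k$ chain by appending $y$. One could equivalently bypass the induction entirely by expanding the $k$-fold product in one stroke: the $(x,y)$ entry of $(\zeta-\delta)^k$ is $\sum(\zeta-\delta)(x,z_1)\cdots(\zeta-\delta)(z_{k-1},y)$, and by the strict-order observation each term equals $1$ exactly when $x<z_1<\cdots<z_{k-1}<y$, which enumerates the length-$k$ chains on $[x,y]$ directly.
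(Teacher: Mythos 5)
Your proposal is correct and follows essentially the same route as the paper: induction on $k$, with the penultimate-element decomposition of a length-$k$ chain matched to the convolution/matrix product $\sum_{z}(\zeta-\delta)^{k-1}(x,z)(\zeta-\delta)(z,y)$. Your closing remark that one can bypass the induction by expanding $(\zeta-\delta)^k(x,y)$ directly into a sum of indicator products over $x<z_1<\cdots<z_{k-1}<y$ is a clean (and valid) shortcut the paper does not take, but it is a streamlining rather than a different argument.
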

\begin{proof}
Let $T(k)$ be the statement: The total number of chains of length $k$ on the interval $[x, y]$ is equal to $(\zeta - \delta)^k (x, y)$.
We will show that $T(k)$ is true for all non-negative integers, $k$, using induction.\newline
i) Base cases:  
Set $k = 0$. It is clear in the poset that chains of length 0 occur when $[x,y]$=$[x,x]$, i.e. when $x=y$ in $P$.  But $(\zeta - \delta)^0 (x,y) = \delta(x,y)$.  Therefore $T(k)$ is true for $k=0$ and the total number of chains of length 0 on the interval $[x,y]$ = the number of $[x,x]$ intervals in the poset = the number of Dyck paths in $D_n$, if this is the poset, $P$, under consideration.
\newline
Now, set $k=1$.  Chains of length 1 are of the form $x=x_1<x_2=y$ and hence the number of such chains is equal to 1 if $x<y$ and 0 if $x \nless y$ and this is equal to $(\zeta - \delta) (x, y)$ and thus $T(k)$ is true for $k=1$.\newline
ii) Assume that $T(k-1)$ is true.  So we are assuming that the total number of chains of length $k-1$ on the interval $[x,y]$ is equal to $(\zeta - \delta)^{k-1} (x, y)$.\newline
iii) We show that this implies $T(k)$ is true for chains of length $k$ in the poset of $n$ elements. Any chain of length $k$ can be decomposed into a chain of length $k-1$, $x=x_0<x_1<\ldots<x_{k-1}=z$,  and a chain consisting of 2 elements, $z=x_{k-1}<x_k = y$.  Fix a value $z$ where $z \in [x,y]$.  Now consider the product:
 
$$(\zeta - \delta)^{k-1} (x,z) \cdot (\zeta - \delta) (z,y).$$

By the multiplication principle, the expression $(\zeta - \delta)^{k-1}(x,z) \cdot  (\zeta - \delta)(z,y)$ is equal to the number of pairs, $(a,b)$, where $a$ belongs to a set, $A$, with cardinality $(\zeta - \delta)^{k-1} (x,z)$, and where $b$ belongs to a set, $B$, with cardinality $(\zeta - \delta) (z,y)$.  In our case, $A$ is the set of chains of length $k-1$ which start at $x$ and end at $z$, and this set is counted by the entry in the $n \times n$ matrix, $(\zeta - \delta)^{k-1} (x,z)$.  Similarly, for our purposes $B$ is the set of chains from $z$ to $y$ for this choice of $z$, and this set is counted by the entry in the $n \times n$ matrix $(\zeta - \delta) (z,y)$.  Note also that since this second number is counting chains of length 1, its value will equal 1 if $z<y$ and 0 if $z \nless y$.\newline

Next, we use the same argument, but consider other choices of $z$ on $[x,y]$.  Since these choices represent disjoint cases, we apply the addition principle to sum over all possible choices of $z$.  So the expression
 
$$\sum_{z\in [x,y]} (\zeta - \delta)^{k-1} (x,z) \cdot (\zeta - \delta) (z,y)$$                    
 
represents the number of pairs whose first element is a chain of length $k-1$ on $[x,z]$ and whose second element is a chain of length 1 on $[z,y]$, for all $z \in [x,y]$.\newline  

%
 
Since $(\zeta - \delta)^{k-1}$ and $(\zeta - \delta)$ are both $n \times n$ matrices, we can multiply using ordinary matrix multiplication.  Therefore
 
$$\sum_{z \in P} (\zeta - \delta)^{k-1} (x,z) \cdot (\zeta - \delta) (z,y) = (\zeta - \delta)^{k} (x,y) = \text{the number of chains on $[x,y]$ in $P$}$$
 
Since $T(k-1)$ true implies $T(k)$ true, by the principle of mathematical induction we have shown that $T(k)$ is true for all non-negative integers, $k$.

\end{proof}

\begin{theorem}\label{chainsproof}{(from \cite{RS1}, p. 115)}\newline
The total number of chains $x=x_0<x_1<\ldots<x_k = y$ from $x$ to $y$ in $D_n$ is equal to $(2\delta -\zeta)^{-1}(x,y)$.
\end{theorem}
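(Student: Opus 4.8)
The plan is to obtain the total chain count by summing the preceding theorem over all possible lengths. First I would note that every chain $x = x_0 < x_1 < \cdots < x_k = y$ has a well-defined finite length $k \geq 0$, and that two chains of different lengths are automatically distinct, so the cases indexed by $k$ are disjoint. Hence the total number of chains on the interval $[x,y]$ is the sum, over all $k \geq 0$, of the number of chains of length $k$. By the previous theorem that number is $(\zeta - \delta)^k(x,y)$, so the total equals the $(x,y)$ entry of the matrix series $\sum_{k \geq 0}(\zeta - \delta)^k$.

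Second, I would argue that this series is in fact a finite sum. Since $D_n$ is a finite poset it contains a longest chain, of some length $N$; no interval can support a chain longer than $N$, so for every $k > N$ the number of chains of length $k$ is zero and therefore $(\zeta - \delta)^k = 0$. Equivalently, listing the elements of $D_n$ according to a linear extension of the order makes $\zeta$ upper triangular with ones on the diagonal, so $\zeta - \delta$ is strictly upper triangular and hence nilpotent. This guarantees that $\sum_{k \geq 0}(\zeta - \delta)^k$ is a genuine (finite) matrix.

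Third, I would apply the standard geometric-series identity for a nilpotent matrix. Writing $M = \zeta - \delta$ and using $M^{N+1} = 0$, the telescoping product gives
\begin{equation*}
(\delta - M)\sum_{k=0}^{N} M^k = \delta - M^{N+1} = \delta,
\end{equation*}
so $\sum_{k \geq 0} M^k = (\delta - M)^{-1}$. Substituting $M = \zeta - \delta$ yields $\delta - M = \delta - (\zeta - \delta) = 2\delta - \zeta$, and therefore $\sum_{k \geq 0}(\zeta - \delta)^k = (2\delta - \zeta)^{-1}$. Reading off the $(x,y)$ entry gives exactly the claimed formula.

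The main obstacle is conceptual rather than computational: one must justify that an infinite geometric series of matrices both makes sense and coincides with the inverse $(\delta - M)^{-1}$. Everything rests on the nilpotency of $\zeta - \delta$, which in turn follows from the finiteness of $D_n$, i.e. from the existence of a bound on chain length. Once nilpotency is established the remaining algebra is the one-line telescoping computation above, and the invertibility of $2\delta - \zeta$ is automatic since, in a linear extension, $2\delta - \zeta$ is upper triangular with ones on the diagonal.
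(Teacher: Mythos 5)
Your proposal is correct and follows essentially the same route as the paper: both sum the length-$k$ chain counts $(\zeta-\delta)^k$ from the preceding theorem and verify via the telescoping product $(2\delta-\zeta)\sum_{k=0}^{N}(\zeta-\delta)^k=\delta-(\zeta-\delta)^{N+1}=\delta$ that this finite geometric series equals $(2\delta-\zeta)^{-1}$, using the bound on chain length (the paper's $l$, your $N$) to kill the top term. Your added remark that a linear extension makes $\zeta-\delta$ strictly upper triangular, hence nilpotent, is a slightly cleaner packaging of the same nilpotency fact the paper invokes, but it does not change the argument.
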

\begin{proof} 
Let $x=x_0, x_1, x_2, \ldots, x_k=y$ be Dyck paths in $D_n$, with the $\zeta$ and $\delta$ functions (or matrices) defined as above.\newline
Therefore $(\zeta -\delta)(x,y)$ = $1$ if $x < y$ and $0$ otherwise (including if $x = y$).  From the previous theorem, $(\zeta -\delta)^k$ counts the number of chains of length $k$ from $x$ to $y$.\newline
Let $l$ be the length of the longest chain in the interval $[x,y]$.  Then $(\zeta -\delta)^{l+1}(x_i, x_j)=0$ for all $x \leq x_i \leq x_j \leq y$.\newline
Note that the difference matrix $(2\delta -\zeta)$ is upper triangular with ones along the main diagonal.  The determinant ($=1$) is clearly non-zero, and therefore this matrix is invertible.\newline
Now, consider the product
\begin{align*}
&  (2\delta -\zeta)[1 +(\zeta - \delta) + (\zeta - \delta)^2 + \ldots + (\zeta - \delta)^l](x_i, x_j)\\
&= (\delta - (\zeta - \delta))[1 +(\zeta - \delta) + (\zeta - \delta)^2 + \ldots + (\zeta - \delta)^l](x_i, x_j)\\
&= [\delta - (\zeta - \delta)^{l+1}](x_i, x_j)\\
&= \delta(x_i, x_j)
\end{align*}
\text {  }
Because the product is equal to the identity, it follows that 
$$(2\delta -\zeta)^{-1}=1 + (\zeta - \delta) + (\zeta - \delta)^2 + \ldots + (\zeta - \delta)^l$$ on the interval $[x, y]$.  But since $l$ was defined to be the longest chain over the interval, then the total number of chains from $x$ to $y$ is given by $1 + (\zeta - \delta) + (\zeta - \delta)^2 + \ldots + (\zeta - \delta)^l$ where for each $k$, $(\zeta - \delta)^k$ counts chains of length $k$.
\end{proof}

Using Maple software, we input the zeta matrix and then have the software compute the $(2\delta -\zeta)^{-1}$ matrix.  Summing all of the entries in this matrix and adding one to the total (for the empty chain), we obtain the total number of chains for the poset.
The total number of chains in $D_n$ for $n=0$ to $5$ are listed in the table below. We have added this sequence to \cite{OLEIS} as A143672.\newline
\text{  }
\\
\begin{center}
\begin{tabular}[h]{l c r}

\scriptsize
n &  \scriptsize Number of Chains \\
\hline

\scriptsize 0 & \scriptsize 1\\
\scriptsize 1 & \scriptsize 2\\
\scriptsize 2 & \scriptsize 4\\
\scriptsize 3 & \scriptsize 24\\
\scriptsize 4 & \scriptsize 816\\
\scriptsize 5 & \scriptsize 239968\\
\end{tabular}
\end{center}
\text{  }

\subsection{Chain polynomial}
\begin{definition}
The chain polynomial, $C(P, t) = 1 + \sum_{k} c_k t^{k+1}$ where $c_k$ is the number of chains (or totally ordered subsets) in a poset $P$ of length $k$.  The exponent $k+1$ indicates the number of vertices in the chain and the 1 in front of the sum denotes the empty chain.
\end{definition}

We also use Maple to find the chain polynomial for the $D_n$ poset.  Since $(\zeta - \delta)^k$ counts chains of length $k$, the chain polynomial for $D_n$ is easily determined by choosing values $k=0$ to $l$ (where $l$ is the length of the longest chain in the poset) for each value of $n$.  The results are summarized in the table:\newline
\text{   }
\\
\begin{tabular}[h]{l c c}
\scriptsize
n &  \scriptsize Chain Polynomial & \scriptsize Factored Form \\
\hline

\scriptsize 0 & \scriptsize 1 \\
\scriptsize 1 & \scriptsize $1+t$ & \scriptsize $(1+t)$\\
\scriptsize 2 & \scriptsize $1 + 2t + t^2$ & \scriptsize $(1+t)^2$\\
\scriptsize 3 & \scriptsize $1 + 5t + 9t^2 + 7t^3 + 2t^4$ & \scriptsize $(1+2t)(1+t)^3$\\
\scriptsize 4 & \scriptsize $1+14t+70t^2+176t^3+249t^4+202t^5+88t^6+16t^7$ & \scriptsize $(1+8t+8t^2)(1+2t)(1+t)^4$\\
\scriptsize 5 & \scriptsize $1+42t+552t^2+3573t^3+13609t^4+33260t^5+54430t^6+$\\
\scriptsize   & \scriptsize $60517t^7+45248t^8+21824t^9+6144t^{10}+768t^{11}$ & \scriptsize $(1+37t+357t^2+1408t^3+2624t^4+2304t^5+768t^6)(1+t)^5$
\end{tabular}
\text{}
\\
\\
In this table we can see that for each $n$, the coefficient of $t$ is equal to $C_n$.  This makes sense since the number of chains on the interval $[x,x]$ in the poset will be equal to the number of vertices in the Hasse diagram which in turn corresponds to the number of Dyck paths in $D_n$ (which are enumerated by the Catalan numbers).  
\text{ }
\\
\subsection{Maximal and maximum chains}
\text{  }
\\
\\
\begin{definition}
Let $P$ be a poset.  A chain, $C \in P$ is a maximum chain if no other chain of $P$ has greater length.  A chain, $C \in P$ is a maximal chain if it cannot be extended, i.e. the addition of any other element to the chain would result in it containing at least one pair of incomparable elements.
\end{definition}
\begin{definition}
A finite poset, $P$, is ranked (or graded) if all of its maximal chains have the same length, $n$.  Every ranked poset has a unique rank function, $\rho :P \rightarrow \{0, 1, \ldots, n\}$, where $\rho(x)=0$ if $x$ is a minimal element in $P$ (that is, there is no $x' \in P$ such that $x > x'$).  If $x$ is not a minimal element in $P$, and $y$ covers $x$ in $P$, then the rank function, $\rho (y)=\rho (x) + 1$.  An element $x \in P$ has rank $i$ if $\rho(x)=i$.
\end{definition}
\begin{prop} $D_n$ is a graded poset.
\end{prop}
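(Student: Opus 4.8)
The plan is to exhibit an explicit rank function for $D_n$ and show that it forces every maximal chain to have the same length, which is precisely the definition of graded given above. The natural candidate is the area statistic, so first I would set $\rho(D) = \text{area}(D)$ for each Dyck path $D \in D_n$. Using the isomorphism $f : J(P) \to D_n$ established in the preceding proposition, I would observe that $\rho(f(I)) = |I|$: by construction each point of an order ideal $I$ corresponds to exactly one cell lying below the path $f(I)$, and $\text{area}(f(I))$ counts precisely those cells.

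The heart of the argument is the claim that whenever $y$ covers $x$ in $D_n$, we have $\text{area}(y) = \text{area}(x) + 1$. I would prove this by transporting the question to $J(P)$. In the lattice of order ideals, $I'$ covers $I$ if and only if $I' = I \cup \{p\}$ for a single point $p$ that is minimal in $P_n \setminus I$; this is the standard description of cover relations in the distributive lattice $J(P)$. Consequently $|I'| = |I| + 1$ at every cover, and pushing this forward along the isomorphism $f$ yields $\text{area}(y) = \text{area}(x) + 1$ for covers in $D_n$.

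With this cover property in hand the grading follows quickly. Since $D_n \cong J(P)$ is a finite lattice, it has a unique minimum $\hat 0 = f(\emptyset)$ with $\text{area} = 0$ and a unique maximum $\hat 1 = f(P_n)$ with $\text{area} = \binom{n}{2}$. Any maximal chain in a finite poset possessing $\hat 0$ and $\hat 1$ must begin at $\hat 0$ and end at $\hat 1$, since otherwise it could be extended. Along such a chain each cover step raises the area by exactly $1$, so a maximal chain of length $\ell$ satisfies $\text{area}(\hat 1) - \text{area}(\hat 0) = \ell$, which forces $\ell = \binom{n}{2}$. Hence all maximal chains share the common length $\binom{n}{2}$, and $D_n$ is graded with rank function $\rho = \text{area}$ (equivalently $\rho(D) = |f^{-1}(D)|$).

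The main obstacle is the cover-relation claim, namely that covering increases area by exactly one. I expect the cleanest route is to borrow the characterization of covers in $J(P)$ rather than to analyze the Dyck-path geometry directly; the direct analysis, which amounts to identifying the unique outer corner cell whose inclusion produces the covering path, is equivalent but more delicate to state carefully, so I would relegate it to a remark and rely on the isomorphism for the actual proof.
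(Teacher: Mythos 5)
Your proof is correct, and it is considerably more complete than what the paper actually provides: the paper states the proposition and then simply asserts, in a few sentences, that the rank function is the area and that one Dyck path covers another precisely when they differ by a single cell, offering no justification for that cover description. You arrive at the same rank function ($\rho = \mathrm{area}$) and the same key fact (covers raise area by exactly one), but you actually prove the key fact, and you do so by a different route: rather than analyzing the Dyck-path geometry directly, you transport the question through the isomorphism $D_n \cong J(P)$ established earlier in the paper and invoke the standard characterization of covers in a distributive lattice of order ideals ($I'$ covers $I$ iff $I' = I \cup \{p\}$ with $p$ minimal in $P_n \setminus I$), together with the observation that $\mathrm{area}(f(I)) = |I|$. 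This buys rigor at low cost, since the isomorphism has already been paid for, and it cleanly yields the common maximal-chain length $\binom{n}{2}$ via the $\hat 0$--$\hat 1$ argument; the paper's geometric assertion is quicker but leaves the reader to verify that a cover cannot skip area. One small point you leave implicit and could state in a single line: consecutive elements of a maximal chain are necessarily covers (otherwise an intermediate element could be inserted), which is what licenses the step ``each cover step raises the area by exactly $1$'' along the whole chain.
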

It follows from these definitions that the length (or rank) of any Dyck path poset will be equal to the length of its maximum chain(s).   Thus, $D_n$ is a graded poset where the rank function is the area between the Dyck path and the line $y=x$.  Furthermore, one Dyck path, $D$, covers another path, $D'$, if $D' = D$ minus a cell.\newline  

Moreover, since no two elements having the same rank (which represent Dyck paths covering the same area) in $D_n$ are comparable, the maximum chains of $D_n$ are identical to the maximal chains of $D_n$. Therefore, throughout the remainder of this paper, note that any references to numbers of maximal chains also indicate numbers of maximum chains.  The number of maximal chains can be calculated with the formula $(\zeta -\delta)^l$ where $l = $[(number of vertices in the maximal chain) $-1] = $(number of edges in the maximal chain)$= \binom{n}{2}$ is the length of the maximal chain.
In the table of chain polynomials above, the number of maximal chains is equal to the coefficient in front of the largest power of $t$.  This sequence is included in \cite{OLEIS} as A005118.  The table below lists the number and length of maximal chains for $n=0$ to $5$.
\text{ }
\\
\begin{center}
\begin{tabular}[h]{l c c}
\scriptsize
n &  \scriptsize Number of Maximal Chains &  \scriptsize Length of Maximal Chain\\
\hline

\scriptsize 0 & \scriptsize 1 & \scriptsize 0\\
\scriptsize 1 & \scriptsize 1 & \scriptsize 0\\
\scriptsize 2 & \scriptsize 1 & \scriptsize 1\\
\scriptsize 3 & \scriptsize 2 & \scriptsize 3\\
\scriptsize 4 & \scriptsize 16 & \scriptsize 6\\
\scriptsize 5 & \scriptsize 768 & \scriptsize 10\\
\end{tabular}
\end{center}
\text{ }
\\

Stanley in \cite{RS1} offers a different version of this formula, which can be proved using an argument similar to that in theorem \ref{chainsproof} above:  the number of maximal chains on $[x,y]$ is equal to $(\delta -\eta)^{-1}(x,y)$ where $\eta(x,y)=1$ if $y$ covers $x$ and $0$ otherwise.

\subsection{Bijection with Young tableaux}
\text{ }
\\
\\
Alternatively, we demonstrate a second method for counting the number of maximal chains in $D_n$.\newline
\begin{prop}
The number of maximal chains in $D_n$ is equal to the number of standard Young tableaux for the staircase partition of size $\binom{n}{2}$ above the minimal Dyck path in $D_n$ and therefore are counted using the formula
\begin{equation}
\frac{\binom{n}{2}!}{\prod_{i=1}^{n-1}(2i-1)^{n-i}}.
\end{equation}
\label{tableau}
\end{prop}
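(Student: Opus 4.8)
The plan is to split the statement into two pieces: first establish the bijection between maximal chains in $D_n$ and standard Young tableaux (SYT) of the staircase shape $\delta_{n-1}=(n-1,n-2,\ldots,1)$, and then evaluate the count of such tableaux using the hook length formula. For the bijective part, I would exploit the isomorphism $D_n\cong J(P_n)$ already proved above, where $P_n$ is the poset of cells of the staircase partition. Since the rank function on $D_n$ is the area statistic, a maximal chain is a saturated chain from the bottom element (empty order ideal) to the top element (full order ideal), adding exactly one cell at each step. Such a chain is precisely a sequence $\emptyset=I_0\subset I_1\subset\cdots\subset I_{\binom{n}{2}}=P_n$ of order ideals in which each $I_{j}$ has one more cell than $I_{j-1}$. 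Reading off, for each $j$, the unique cell of $I_j-I_{j-1}$ and labelling it with $j$, I would check that this produces a filling of the staircase shape that increases along rows and down columns — i.e.\ a standard Young tableau — because the order-ideal condition forces each newly added cell to have all its $P_n$-predecessors already present, which translates exactly to the row/column monotonicity. Conversely every SYT gives such a maximal chain by adding cells in the order of their labels, so the correspondence is a bijection.

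The second part is the enumeration. Having reduced to counting SYT of the staircase shape $\delta_{n-1}$ with $\binom{n}{2}$ cells, I would apply the Frame–Robinson–Thrall hook length formula,
\[
f^{\delta_{n-1}}=\frac{\binom{n}{2}!}{\prod_{c\in\delta_{n-1}} h(c)},
\]
where $h(c)$ is the hook length of cell $c$. The work here is to evaluate the hook product $\prod_c h(c)$ for the staircase and show it equals $\prod_{i=1}^{n-1}(2i-1)^{\,n-i}$. The key combinatorial observation for the staircase is that its hook lengths are all \emph{odd}: in $\delta_{n-1}$ the hook length of a cell is $2d+1$ where $d$ measures its distance to the staircase boundary, and a careful bookkeeping shows that for each odd value $2i-1$ (with $1\le i\le n-1$) there are exactly $n-i$ cells whose hook length equals $2i-1$. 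I would verify this by fixing a cell in row $r$ and column $c$ of the shape, writing its arm and leg lengths explicitly in terms of $r,c$ and $n$, observing arm $+$ leg is even so the hook is odd, and then counting the multiplicity of each odd value. Substituting the resulting product into the hook length formula yields the stated formula.

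The main obstacle I expect is the hook-length bookkeeping for the staircase: getting the arm and leg lengths right as functions of the cell's position, confirming that every hook length is odd, and correctly tallying that the value $2i-1$ occurs with multiplicity $n-i$. A convenient way to sidestep index errors would be to set up the count by anti-diagonals of the staircase, or to verify the multiplicities against the small cases $n=3,4,5$ already tabulated in the chain-polynomial table (which give $2$, $16$, $768$ maximal chains); matching these against $\prod_{i=1}^{n-1}(2i-1)^{\,n-i}$ provides a useful sanity check before committing to the general argument. The bijective half should be routine given the established isomorphism $D_n\cong J(P_n)$, so the enumerative evaluation of the hook product is where the genuine care is needed.
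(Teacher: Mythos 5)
Your proposal is correct and follows essentially the same route as the paper: both identify a maximal chain with a cell-by-cell growth sequence of the staircase shape (your order-ideal chains in $J(P_n)$ are exactly the paper's sequence of partitions encoded as $(n-1)$-tuples, grown one cell per step), label the cell added at step $j$ by $j$ to obtain a standard Young tableau, and then evaluate the count by the Frame--Robinson--Thrall hook length formula with the hook multiset of the staircase being $(n-1)$ ones, $(n-2)$ threes, \ldots, i.e.\ the value $2i-1$ occurring $n-i$ times. Your explicit verification that every hook is odd (arm plus leg even, hook $2(n-r-c)+1$ for the cell in row $r$, column $c$) is a welcome elaboration of a step the paper merely asserts, but it is the same argument.
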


However, before proving this claim, we need to define some terms and establish the correspondence between Dyck paths and Young diagrams.
\begin{definition} A partition, $\lambda$, is a sequence of positive integers $\lambda_1, \lambda_2, \ldots, \lambda_k$ of weakly decreasing size.  The $\lambda_i$ are called the parts of the partition and the sum of the parts, $\lambda_1 + \lambda_2 + \ldots + \lambda_k$, gives the area of $\lambda$.
\end{definition}
Each partition can be represented graphically by a Young (Ferrer) diagram.
\begin{definition} A Young diagram is a set of cells arranged in rows of weakly decreasing size which are aligned on the left.  Each row of cells in the diagram corresponds to a part, $\lambda_i$, in the partition.  The Young diagrams in this paper will follow the English notation so that the row of largest size is on the top of the diagram. Therefore we will refer to the top row of the diagram (which corresponds to $\lambda_1$ in the partition) as row 1 and then number the rows below using consecutive positive integers. 
\end{definition}
\begin{definition} A standard Young tableau is a Young diagram in which the cells are filled with the positive integers, $1, 2, \ldots, n$, in such a way that the values along each row (from left to right) and column (from top to bottom) are increasing and in a tableau of $n$ cells, each integer from $1$ to $n$ will occur exactly once.
\end{definition}

First we note the obvious bijection between Dyck paths and the Young diagrams that occupy the grid cells lying above the paths.  If we place the Young diagram onto an $n \times n$ grid such that the leftmost cell in row one of the diagram occupies the leftmost cell in the top row of the grid, then the Dyck path will follow the exposed bottom and righthand edges of the partition. In addition, it must have a vertical segment joining the path to $(0,0)$ and a horizontal segment joining the path to $(n,n)$.  Conversely, given a Dyck path, the Young diagram can be constructed by starting at the left side of the first horizontal edge in the Dyck path, tracing along the path until reaching the upper end of the last vertical segment and then joining the end points of this subpath using lines drawn along the left and top borders of the grid.\newline
Now, if $D_n$ is a Dyck path poset of order $n$ and $d \in D_n$ is the path with an area of 0 (the staircase path that contains no complete lattice cells between it and the main diagonal), then the number of complete lattice cells above this path will be equal to $\frac{n(n-1)}{2}$ since it forms a staircase partition with rows of sizes $1, 2, \ldots, n-1$ from bottom to top.  Since all Dyck paths are formed using only N and E steps and travel from $(0,0)$ to $(n,n)$, every path in $D_n$ can be uniquely mapped to a valid partition diagram (lying above the Dyck path within the grid) that contains at most $n-1$ rows. Recall that in order for a partition to be considered `valid', the size of any row $i$ must be $\leq$ the size of any rows above $i$. We introduce a vector notation to record the parts of the partition occupying each row in the grid. If we number the rows of the $n \times n$ grid from top to bottom with $1, 2, \ldots, n$, then we can represent these partition diagrams as ordered $(n-1)$-tuples, $(x_1, x_2, \ldots, x_{n-1})$, in which $x_i$ is the size of the $i$th row in the partition diagram.  It is evident that the first $k$ entries in this vector will correspond to $\lambda_1, \lambda_2, \ldots, \lambda_k$ in the partition and that in order to lie above the Dyck path, the maximum size for the $i$th row is $n-i$.\newline
\begin{example}   This partition of $D_5$ would be encoded by the tuple $(4,2,1,0)$.  The corresponding Dyck path from $(0,0)$ to $(5,5)$ is also shown.
\begin{figure}[htbp]
	\centering
		\includegraphics[width=2.5in]{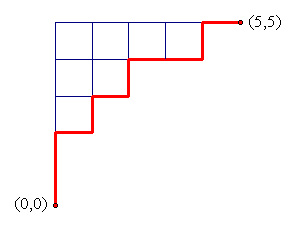}
	\caption{The $(4, 2, 1, 0)$ partition of $D_5$ and its corresponding Dyck path}
	\label{The $(4, 2, 1, 0)$ partition of $D_5$}
\end{figure}
\end{example}
\text{ }
\\
\\
We are now ready to prove proposition \ref{tableau}.

We do this by establishing a bijection between the number of maximal chains in $D_n$ and the number of standard Young tableaux in the $\frac{n(n-1)}{2}$ staircase partition above the minimal Dyck path in $D_n$.

\begin{proof}
Let $D_n={d_1, d_2, \dots, d_k}$ be the Dyck path poset.  There are $k$ distinct Young diagrams within the grid cells above the Dyck path, in 1:1 correspondence with ${d_1, d_2, \dots, d_k}$.  Each diagram can be represented by a unique $(n-1)$-tuple, $(x_1, x_2, \ldots, x_{n-1})$, in which $x_i$ is the size of the $i$th row in the Young diagram.  In order for one Dyck path to cover another in the poset, it is necessary that the partition diagrams differ by exactly one cell, and therefore the corresponding tuples differ by exactly one for one of the $x_i$.\newline
A maximal chain in $D_n$ will include one partition of each size from $0$ to $\binom{n}{2}$ inclusive since these correspond to the ranks in the poset.  Recall that no two Dyck paths of the same area are comparable, so only one from each rank is permitted in the maximal chain.  A maximal chain in the poset is constructed starting with the empty partition with $(n-1)$-tuple $(0, 0, \ldots, 0)$ and increasing one of the  $x_i$ by one at each stage, such that a valid partition is created, until the staircase partition with $(n-1)$-tuple $(n-1, n-2, \ldots, 1)$ is obtained.  In terms of the Young diagrams, this means that once the first cell has been placed, the partition is increased at each stage by adding one cell to the right of or below an existing cell. This is equivalent to changing the $(n-1)$-tuple consisting of all zeros to the $(n-1)$-tuple $(n-1, n-2, \ldots, 1)$ where at all stages $(x_1 \geq x_2 \geq \ldots \geq x_{n-1})$. The transformation occurs by increasing one of the entries by one at each step such that the maximum value for any entry is not exceeded.  The number of ways of carrying out this procedure will be equal to the number of maximal chains for the poset.
But, now imagine that at each stage in the construction, as a cell is added to the partition it is assigned the next integer in the sequence $1, 2, \dots, n-1$.  With this numbering process, we have created a standard Young tableau.  So, the number of different ways of building the partition diagram (up to the staircase shape) with the cells labelled  as indicated will equal the number of valid ways of changing the $(n-1)$-tuple consisting of all zeros to the $(n-1)$-tuple $(n-1, n-2, \ldots, 1)$, and will equal the number of maximal chains for the poset.
\end{proof}

Conveniently, the number of standard Young tableaux for a partition is straightforward to calculate using the hook length formula \cite{GA}, \cite{VLW}.
\begin{theorem}{(Robinson-Frame-Thrall Theorem)}\newline
The hook length of a cell, $x$, which lies in a Young diagram for a partition, $\lambda$, can be calculated by summing the number of cells in $\lambda$ that lie to the right of $x$ and the number of cells in $\lambda$ that lie below $x$ and then adding one for the cell itself.  If a partition, $\lambda$, consists of $n$ cells, then: $$\text{the number of standard Young tableaux in $\lambda$} = \frac{n!}{\prod_{x \in \lambda}hook(x)}.$$
\end{theorem}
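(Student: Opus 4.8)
The plan is to prove the formula by induction on the number of cells $n$, using the fact that the count of standard Young tableaux satisfies a simple recursion coming from the position of the largest entry. Write $f^\lambda$ for the number of standard Young tableaux of shape $\lambda$, and set $H^\lambda = n!/\prod_{x\in\lambda} h(x)$ for the right-hand side, where $h(x)$ is the hook length defined in the statement. The goal is to show $f^\lambda = H^\lambda$ for every partition, the base case being the single-cell partition, where both sides equal $1$.

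First I would establish the branching recurrence for $f^\lambda$. In any standard Young tableau of shape $\lambda$ with $n$ cells, the entry $n$ must occupy a removable corner -- a cell with no cell to its right and none below it -- since $n$ is the largest entry and the rows and columns are increasing. Deleting that cell leaves a standard Young tableau of the smaller shape $\lambda \setminus c$ on the remaining $n-1$ entries, and this is a bijection once the corner $c$ is fixed. Summing over all removable corners gives
$$f^\lambda = \sum_{c} f^{\lambda \setminus c}.$$
By the induction hypothesis $f^{\lambda \setminus c} = H^{\lambda \setminus c}$ for each corner, so it suffices to prove that $H^\lambda$ obeys the same recurrence, namely $H^\lambda = \sum_c H^{\lambda \setminus c}$.

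The hard part is exactly this last identity, which is a purely algebraic statement about how hook lengths change when a corner is removed. Deleting a corner $c$ leaves most hook lengths unchanged but decreases by one the hook length of every cell directly above $c$ in its column and every cell directly to the left of $c$ in its row. Tracking these unit decrements turns the target into a statement of the form $1 = \sum_c p_c$, where $p_c = H^{\lambda\setminus c}/H^\lambda$ is a ratio of products over the arm and leg hooks through $c$. Verifying that these ratios sum to $1$ is the technical crux. I would handle it by the hook-walk argument of Greene, Nijenhuis and Wilf: interpret $p_c$ as the probability that a random walk inside $\lambda$, starting at a uniformly chosen cell and at each step moving to a uniformly chosen other cell of its own hook (strictly below it in its column or strictly to its right in its row), terminates at the corner $c$. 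Since such a walk must stop at some corner, the $p_c$ sum to $1$ automatically, which delivers the identity and closes the induction.

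As an alternative route that avoids the probabilistic argument, I would instead derive the Frobenius determinantal formula $f^\lambda = n!\,\prod_{i<j}(\ell_i-\ell_j)/\prod_i \ell_i!$ with $\ell_i=\lambda_i+k-i$ -- which fits the lattice-path theme of this paper, since it follows from counting non-intersecting families of north/east paths via the Lindström--Gessel--Viennot lemma -- and then prove the finite identity $\prod_{x\in\lambda} h(x) = \prod_i \ell_i! \big/ \prod_{i<j}(\ell_i-\ell_j)$ relating the hook-length product to the beta-numbers $\ell_i$. Either way, the combinatorial count is the easy half and the manipulation of hook lengths is the obstacle.
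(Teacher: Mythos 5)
The paper never proves this theorem: it states the hook length formula and immediately defers, remarking that ``a proof of this theorem is provided in \cite{VLW},'' so there is no internal argument to compare yours against line by line. That said, your proposal is correct in outline, and your primary route --- the branching recurrence $f^{\lambda}=\sum_{c}f^{\lambda\setminus c}$ over removable corners, followed by the Greene--Nijenhuis--Wilf hook walk to prove $H^{\lambda}=\sum_{c}H^{\lambda\setminus c}$ --- is essentially the proof presented in the very reference the paper cites, so you have in effect supplied the argument the paper outsources. The branching step as you state it is complete: the largest entry must sit in a removable corner, and deletion is a bijection. The one place where your write-up is a plan rather than a proof is the identification of $p_{c}=H^{\lambda\setminus c}/H^{\lambda}$ with a termination probability: after the cancellations you describe, $p_{c}=\frac{1}{n}\prod_{x}\frac{h(x)}{h(x)-1}$, the product running over cells strictly left of $c$ in its row and strictly above $c$ in its column, and the actual content of Greene--Nijenhuis--Wilf is the expansion of this product as a sum over the possible row- and column-projections of hook-walk trajectories ending at $c$ --- that computation \emph{is} the theorem, and you invoke it by name rather than carry it out. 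Since you correctly isolate it as the crux and point to a valid, standard way to discharge it, the plan closes. Your alternative route --- the Frobenius determinantal formula obtained from non-intersecting lattice paths via Lindstr\"om--Gessel--Viennot, together with the beta-number identity $\prod_{x\in\lambda}h(x)=\prod_{i}\ell_{i}!\big/\prod_{i<j}(\ell_{i}-\ell_{j})$ --- is also correct, and arguably sits better with this paper's lattice-path theme, at the cost of the determinant evaluation and that finite identity replacing the probabilistic lemma as the technical obstacle.
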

A proof of this theorem is provided in \cite{VLW}.\newline
\text{  }
\\
Since we are dealing only with staircase partitions, we can specialize this formula.  For $D_n$, staircase partitions above the minimal Dyck path always consist of $\binom{n}{2}$ cells, so we use $\binom{n}{2}!$ for the numerator.  The hook lengths for our diagrams always consist of $(n-1)$ ones, $(n-2)$ threes, $(n-3)$ fives, and so on.  Therefore the product of the hook lengths in the denominator will be $\prod_{i=1}^{n-1}(2i-1)^{n-i}$.  We use the specialized formula in the solution for the example below.
\begin{example}
In the diagram below (the staircase partition for $D_5$), the cells have been labelled with their hook lengths.\newline

\begin{figure}[h]
	\centering
		\includegraphics[width=2in]{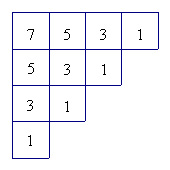}
	\caption{Hook lengths for $D_5$ staircase partition}
	\label{Hook lengths}
\end{figure}
\text{  }
\\
Then, by the hook length formula, the number of standard Young tableaux corresponding to this diagram is equal to  \begin{align*}
\frac{\binom{n}{2}!}{\prod_{i=1}^{n-1}(2i-1)^{n-i}}&= \frac{\binom{5}{2}!}{\prod_{i=1}^{4}(2i-1)^{5-i}}\\
 &= \frac {10!}{(1^4)\cdot(3^3)\cdot(5^2)\cdot(7^1)}\\
 &= \frac {3628800}{4725}\\
 &= 768
 \end{align*}
 \text{  }
which is equal to the number of maximal chains in $D_5$.
\end{example}

\newpage
\section{Counting the number of antichains in $D_n$}
\subsection{Total number of antichains}
The next subsets of $D_n$ that we consider are the antichains.
\begin{definition}
Let $P$ be a poset.  An antichain is a subset, $A \in P$, in which no two distinct elements are comparable.
\end{definition}
\begin{example}
In the $D_3$ poset, there are 7 antichains:  \{\text{ }\}, \{A\}, \{B\}, \{C\}, \{D\}, \{E\}, \{B,C\}.
\begin{figure}[htbp]
	\centering
		\includegraphics[width=1.2in]{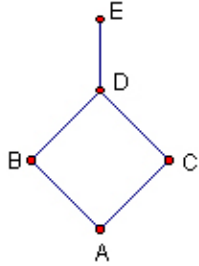}
	\caption{$D_3$ poset}
	\label{$D_3$ poset}
\end{figure}
\end{example}
We are interested in counting the number of antichains in $D_n$ posets for other values of $n$.  Interestingly, the values obtained match those found when the order ideals of $D_n$ are counted.  Recall that an order ideal (or downset) of a poset, $P$, is a subset, $I \in P$, such that if $x \in I$ and $y \leq x$, then $y \in I$.
In figure \ref{$D_3$ poset}, there are seven order ideals in the $D_3$ poset: \{\text{ }\}, \{A\}, \{BA\}, \{CA\}, \{CBA\}, \{DCBA\}, \{EDCBA\}. \newline

\begin{prop} {(from \cite{RS1}, p. 100)}\newline
Let $A=\{A_1, A_2, \ldots, A_k\}$ be the set of antichains in $D_n$ and let $I=\{I_1, I_2, \ldots, I_l\}$ be the set of order ideals in $D_n$.  Then we claim that $k=l$ (the number of elements in $A$ is equal to the number of elements in $I$).
\end{prop}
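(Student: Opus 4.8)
The plan is to exhibit an explicit bijection between the antichains of $D_n$ and the order ideals of $D_n$, which immediately forces $k = l$. This is the standard ``down-closure versus maximal elements'' correspondence, and the only structural fact it requires is that $D_n$ is finite (which holds since $|D_n| = C_n$).

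First I would define a map $\Phi$ sending an antichain $A$ to its down-closure $\Phi(A) = \{x \in D_n : x \leq a \text{ for some } a \in A\}$, and check that this is genuinely an order ideal (if $y \leq x \leq a$ then $y \leq a$, so $y \in \Phi(A)$). Conversely, I would define a map $\Psi$ sending an order ideal $I$ to its set of maximal elements $\Psi(I) = \{m \in I : \text{no } x \in I \text{ satisfies } m < x\}$, which is an antichain by construction, since two distinct maximal elements of $I$ cannot be comparable.

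Then I would verify that $\Phi$ and $\Psi$ are mutually inverse. For $\Psi(\Phi(A)) = A$: each $a \in A$ is maximal in $\Phi(A)$, since $a < y \in \Phi(A)$ would give $a < y \leq a'$ for some $a' \in A$, contradicting that $A$ is an antichain; and any maximal element of $\Phi(A)$ lies below, hence equals, some element of $A$. For $\Phi(\Psi(I)) = I$: finiteness guarantees that every $x \in I$ lies below some maximal element of $I$, so taking the down-closure of $\Psi(I)$ recovers all of $I$.

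The argument is essentially formal, so there is no deep obstacle. The one step that genuinely requires care --- and the precise point where an infinite poset would break the correspondence --- is the identity $\Phi(\Psi(I)) = I$, which relies on finiteness to ensure that no element of an order ideal fails to sit beneath a maximal element. Once both composites are confirmed to be the identity, $\Phi$ is a bijection between antichains and order ideals, and the equality $k = l$ follows at once.
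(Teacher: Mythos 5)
Your proposal is correct and takes essentially the same route as the paper: the paper's proof also sends an antichain to the order ideal consisting of its down-closure and recovers the antichain as the set of maximal elements of the ideal, checking injectivity and surjectivity of that map. Your version is if anything slightly more careful, since you verify both composites $\Psi\circ\Phi$ and $\Phi\circ\Psi$ explicitly and pinpoint where finiteness is needed, a point the paper's argument leaves implicit.
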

\begin{proof}
We will prove this claim by establishing a bijection between the sets $A$ and $I$.\newline
We define $f: A \rightarrow I$ by taking $A_j \in A$ to be the maximal element of $I_m \in I$ where by definition of an order ideal,  $I_m$ consists of $A_j$ plus any elements in $D_n$ which are less than or equal to $A_j$.  We will show that the function $f$ is both 1:1 and onto.\newline
\text{  }
\\
i)  1:1\newline
Suppose that we have two different antichains, $A_x, A_y \in A$.  Then, by our function definition, $A_x$ will correspond to the order ideal $I_x=\{A_x, d_1, d_2, \ldots, d_r\}$ where each of the $d_i \in D_n$ are $\leq A_x$ and $A_y$ will be mapped to the order ideal $I_y=\{A_y, d_1, d_2, \ldots, d_s\}$ where each of the $d_j \in D_n$ are $\leq A_y$.  Clearly if $A_x = A_y$ then we would have the same order ideal.  Since we started with two different antichains, there must be at least one element in $A_x$ that is not in $A_y$ or vice versa.  Without loss of generality, we may assume that this extra element is in $A_x$.   Then $I_x$ must be different than $I_y$ since they have different sets  of maximal elements and all remaining elements (Dyck paths) in both order ideals are less than or equal to the set of  maximal elements.\newline
\text{  }
\\
ii) onto\newline
Now, suppose that $I_x$ is an arbitrary order ideal $\in I$.  Then it will consist of a set of maximal elements, $i_m$,  and a set of Dyck paths that are $\leq i_m$.  But since $I_x$ is an order ideal in $D_n$, the set of elements, $i_m$,  must consist of one or more Dyck paths and since $i_m$ is maximal, then it is either a single Dyck path or a set of incomparable Dyck paths.  In either case, by definition of antichain, the set of maximal elements, $i_m$, corresponds to a unique antichain in $D_n$ which under the definition of $f$ will map onto the order ideal $I_x$.
\end{proof}
\text{ }
\\
As we saw in a previous section, sets of order ideals will themselves form posets if they are ordered by inclusion.  In fact, we used a specially defined set of order ideals to determine the M\"obius function for $D_n$.  But, now we continue the discussion about antichains by listing the number of antichains in $D_n$ for $n=0$ to $5$.  The center column of this chart details the breakdown according to the number of elements in (or size of) the antichain beginning with antichains of size 0 and increasing.  These data were obtained using MuPAD-Combinat \cite{webMuPAD}.  \newline
\begin{center}
\begin{tabular}[h]{l l c}
\scriptsize n & \qquad\scriptsize Number of Antichains in $D_n$ &  \qquad\scriptsize Total\\
\scriptsize {  } & \qquad\scriptsize (arranged by increasing size) & \qquad\scriptsize {  }\\
\hline

\scriptsize 0 & \qquad\scriptsize 1, 1 & \qquad\scriptsize 2\\
\scriptsize 1 & \qquad\scriptsize 1, 1 & \qquad\scriptsize 2\\
\scriptsize 2 & \qquad\scriptsize 1, 2 & \qquad\scriptsize 3\\
\scriptsize 3 & \qquad\scriptsize 1, 5, 1 & \qquad\scriptsize 7\\
\scriptsize 4 & \qquad\scriptsize 1, 14, 21, 6 & \qquad\scriptsize 42\\
\scriptsize 5 & \qquad\scriptsize 1, 42, 309, 793, 810, 348, 56, 2 & \qquad\scriptsize 2361\\
\end{tabular}
\end{center}
We have added this sequence to \cite{OLEIS} as A143673.
\text{ }
\\
\subsection{Maximal antichains}
We define maximal and maximum antichains in an analogous way to the maximal and maximum chain definitions from the previous section.\newline
\begin{definition}  An antichain in a poset, $P$, is a maximal antichain if it cannot be extended, meaning that the addition of any other element of $P$ to the antichain would result in it containing at least one pair of comparable elements and therefore violating the antichain condition.
\end{definition}
This definition implies that no maximal antichain is a proper subset of any other antichain.  We will use this property to enumerate the total number of maximal antichains for $D_n$.
However, we will first establish the following claim.
\begin{prop}
The elements in each row on the Hasse diagram form a maximal antichain for $D_n$.
\end{prop}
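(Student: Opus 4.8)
The elements in each row on the Hasse diagram form a maximal antichain for $D_n$.

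The plan is to verify two things: first, that each row is an antichain at all, and second, that it cannot be extended to a larger antichain. Recall from the earlier discussion that $D_n$ is a graded poset whose rank function is the area between the Dyck path and the line $y=x$, and that no two Dyck paths of the same area are comparable. A ``row'' in the Hasse diagram is precisely a rank level, i.e. the set $R_i = \{D \in D_n : \mathrm{area}(D) = i\}$ for a fixed $i$ between $0$ and $\binom{n}{2}$. The antichain property for $R_i$ is therefore immediate: any two distinct paths in $R_i$ have the same area, and since the cover relation increases area by exactly one (one path covers another iff it differs by a single cell), comparable paths must have strictly different ranks. Hence no two distinct elements of $R_i$ are comparable, so $R_i$ is an antichain.

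The substance of the proof is maximality, and here I would argue by contradiction. Suppose $R_i$ could be extended, so that there is some path $D' \notin R_i$ with $D'$ incomparable to every element of $R_i$. Since $D' \notin R_i$, we have $\mathrm{area}(D') = j \neq i$; without loss of generality take $j < i$ (the case $j > i$ is symmetric). The key step is to show that every path at a strictly lower rank lies below \emph{some} path at rank $i$, i.e. that one can always climb from rank $j$ up to rank $i$ through a chain of covers starting at $D'$. First I would observe that any non-maximal element of the poset is covered by at least one element; equivalently, any Dyck path that is not the top path $(n,n)$-staircase admits a cell that can be added while remaining a valid partition diagram (using the tuple encoding $(x_1,\dots,x_{n-1})$ established earlier, one simply increments an $x_k$ that keeps the sequence weakly decreasing and within bounds). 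Iterating this $i - j$ times produces a chain $D' = D'_j < D'_{j+1} < \cdots < D'_i$ with $\mathrm{area}(D'_m) = m$, so $D'_i \in R_i$ and $D' < D'_i$. This contradicts the assumed incomparability of $D'$ with every element of $R_i$.

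The main obstacle is the climbing step: one must be sure that a valid cover always exists above any non-top path, so that the rank can genuinely be raised one unit at a time up to level $i$. This rests on the fact that $D_n$ is graded with rank equal to area and that covers correspond to adding a single cell, both of which are available from the preceding material; the tuple/partition encoding makes the ``add one admissible cell'' construction concrete and guarantees the intermediate paths are themselves Dyck paths. A symmetric argument (removing an admissible cell, i.e. decrementing some $x_k$ while keeping the tuple valid) handles the case $j > i$, showing $D'$ lies \emph{above} some element of $R_i$. In either case $D'$ is comparable to a member of $R_i$, so no extension is possible and $R_i$ is a maximal antichain.
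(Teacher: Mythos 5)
Your proof is correct and takes essentially the same route as the paper: identify each row with a rank (area) level, note that distinct paths of equal area are incomparable, and argue that any path of a different area is comparable to some member of the row. In fact you give strictly more detail than the paper, whose proof simply asserts that adding any other Dyck path creates a comparable pair; your cover-climbing argument through the graded structure (iterating ``add/remove one admissible cell'' to move between rank $j$ and rank $i$) is precisely the justification that assertion needs.
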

\begin{proof}
Let $D_n$ be the poset of Dyck paths of order $n$.  Then $D_n$ can be represented by a Hasse diagram such that each row of the diagram corresponds to a rank, and that the elements in the poset are ranked according to the area they cover.  For $D_n$, no two elements having the same rank are comparable, so each row forms an antichain.  The addition of any other Dyck path to the antichain formed by the elements of a given rank will result in at least one pair of comparable elements and thus the elements of each row of the Hasse diagram form a maximal antichain in $D_n$.  
\end{proof}
Note that the empty set, while an antichain, does not satisfy the criteria to be a maximal antichain in $D_n$ for $n \geq 1$ since it is possible to add another element from $D_n$ to $\{\text{ } \}$ and still have an antichain.  We can calculate the number of rows in the Hasse diagram for $D_n$ using the formula $\binom{n}{2} + 1$ since it will include ranks corresponding to areas ranging from size $0$ to $\binom{n}{2}$.  Thus, the number of maximal antichains for $D_n$ must be greater than or equal to $\binom{n}{2} + 1$.\newline
  
The table below lists the number of maximal antichains in $D_n$ for $n=0$ to $5$.  The center column of this chart details the breakdown according to the number of elements in (or size of) the antichain beginning with antichains of size 0 and increasing.  The data were obtained using MuPAD-Combinat \cite{webMuPAD}.

\begin{center}
\begin{tabular}[h]{l l c}
\scriptsize n & \qquad\scriptsize Number of Maximal Antichains in $D_n$ &  \qquad\scriptsize Total\\
\scriptsize {  } & \qquad\scriptsize (arranged by increasing size) & \qquad\scriptsize {  }\\
\hline

\scriptsize 0 & \qquad\scriptsize 1 & \qquad\scriptsize 1\\
\scriptsize 1 & \qquad\scriptsize 0, 1 & \qquad\scriptsize 1\\
\scriptsize 2 & \qquad\scriptsize 0, 2 & \qquad\scriptsize 2\\
\scriptsize 3 & \qquad\scriptsize 0, 3, 1 & \qquad\scriptsize 4\\
\scriptsize 4 & \qquad\scriptsize 0, 3, 8, 6 & \qquad\scriptsize 17\\
\scriptsize 5 & \qquad\scriptsize 0, 3, 14, 62, 132, 124, 42, 2 & \qquad\scriptsize 379\\
\end{tabular}
\end{center}

\text{ }
\\
We have added this sequence to \cite{OLEIS} as A143674.
\text{ }
\\
\subsection{Maximum antichains}
\begin{definition}
An antichain in a poset, $P$, is a maximum antichain if there are no other antichains of greater size in $P$.  The cardinality or size of the maximum antichain(s) is the width of the poset.
\end{definition}
\text{ }
\\
Therefore in order to determine the size of the maximum antichains we need to know the breakdown according to rank for the $C_n$ elements in $D_n$.  These are listed in \cite{OLEIS} as A129176.    The recursion used to calculate these values is: $C_0(q)=1; C_{n+1}(q)=\sum_{k=0}^n q^{(k+1)(n-k)}\cdot C_k \cdot C_{n-k}$ which gives a polynomial in $q$.  When the coefficients are listed in order of increasing powers of $q$, we obtain the number of Dyck paths in $D_n$ ordered by decreasing rank.  We are able to derive this equation using $C_n^{inv}(q)$ and the $C_n^{area}(q)$ recursion.
Recall that
\begin{align}
C_n^{inv}(q) &= \sum_{D \in D_n} q^{inv(D)}\nonumber\\
&= \sum_{D \in D_n} q^{\binom{n}{2}-area(D)}\nonumber\\
&= q^{\binom{n}{2}}\sum_{D \in D_n}(1/q)^{area(D)}\nonumber\\
&= q^{\binom{n}{2}}C_n^{area}(1/q)\label{cninvarea}\\
\nonumber \end{align}
and
\begin{equation} 
C_{n+1}^{area}(q)=\sum_{k=0}^{n}q^kC_k^{area}(q)C_{n-k}^{area}(q) \label{cnarea}
\end{equation}
\newline
Now, using \eqref{cninvarea} and \eqref{cnarea} we derive the recurrence relation:
\begin{align*}
C_{n+1}^{inv}(q) &= q^{\binom{n+1}{2}}C_{n+1}^{area}(1/q)\\
&= q^{\binom{n+1}{2}}\sum_{k=0}^{n}q^{-k}C_k^{area}(1/q)C_{n-k}^{area}(1/q)\\
&= q^{\binom{n+1}{2}}\sum_{k=0}^{n}q^{-k}q^{-\binom{k}{2}}C_k^{inv}(q)q^{-\binom{n-k}{2}}C_{n-k}^{inv}(q)\\
&= \sum_{k=0}^n q^{\frac{2n-2k^2-2k+2nk}{2}}C_k^{inv}(q)C_{n-k}^{inv}(q)\\
&= \sum_{k=0}^n q^{(k+1)(n-k)}C_k^{inv}(q)C_{n-k}^{inv}(q)\\
\end{align*}


Here the first few elements of the sequence generated by this recursion are arranged in a table where the $n$th row has $1+(n(n-1))/2$ terms (for $n \geq 1$) and each row sum is the Catalan number, $C_n$.\newline
\begin{center}
\begin{tabular}[h]{l l c}
\scriptsize
n &  \qquad\scriptsize Number of Dyck paths in $D_n$ by decreasing rank & \scriptsize Total number of paths\\
{ } & \qquad\scriptsize { } & \scriptsize $(C_n)$\\
\hline
\scriptsize 0 & \qquad\scriptsize 1 & \scriptsize 1\\
\scriptsize 1 & \qquad\scriptsize 1 & \scriptsize 1\\
\scriptsize 2 & \qquad\scriptsize 1,1 & \scriptsize 2\\
\scriptsize 3 & \qquad\scriptsize 1,1,2,1 & \scriptsize 5\\
\scriptsize 4 & \qquad\scriptsize 1,1,2,3,3,3,1 & \scriptsize 14\\
\scriptsize 5 & \qquad\scriptsize 1,1,2,3,5,5,7,7,6,4,1 & \scriptsize 42\\
\scriptsize 6 & \qquad\scriptsize 1,1,2,3,5,7,9,11,14,16,16,17,14,10,5,1 & \scriptsize 132\\
\scriptsize 7 & \qquad\scriptsize 1,1,2,3,5,7,11,13,18,22,28,32,37,40,44,43,40,35,25,15,6,1 & \scriptsize 429\\
\end{tabular}
\end{center}
From this table, it is easy to determine the size of the maximum antichains in $D_n$ by finding the largest value within each set of coefficients.  However, to find the number of maximum antichains it is not sufficient to simply count the frequency of occurrence of the largest value in each set in the table above since some maximum antichains may not consist entirely of elements having the same rank. Instead, we refer back to the MuPAD data from the table listing the total number of antichains as it also detailed the breakdown according to size. The size and number of maximum antichains in $D_n$ are listed in the chart below:\newline
\begin{center}
\begin{tabular}[h]{l c c}
\scriptsize
n &  \qquad\scriptsize Size of Maximum Antichain(s) & \scriptsize Number of Maximum Antichains in $D_n$\\
\hline
\scriptsize 0 & \qquad\scriptsize 1 & \scriptsize 1\\
\scriptsize 1 & \qquad\scriptsize 1 & \scriptsize 1\\
\scriptsize 2 & \qquad\scriptsize 1 & \scriptsize 2\\
\scriptsize 3 & \qquad\scriptsize 2 & \scriptsize 1\\
\scriptsize 4 & \qquad\scriptsize 3 & \scriptsize 6\\
\scriptsize 5 & \qquad\scriptsize 7 & \scriptsize 2\\
\end{tabular}
\end{center}
\text{ }
\\
Using Dilworth's Theorem (which applies since $D_n$ is a finite poset) the numbers in the centre column of this table also tell us the size of the smallest chain cover for the poset \cite{MB},\cite{VLW}.  
\begin{theorem}{(Dilworth's Theorem)}
For any finite poset, $P$, the size of any maximum antichain is equal to the minimum number of disjoint chains needed to fully partition $P$.
\end{theorem}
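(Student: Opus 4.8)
The statement splits into two inequalities. Write $w$ for the size of a maximum antichain (the width of $P$) and $m$ for the minimum number of chains in a partition of $P$ into pairwise disjoint chains. The plan is to establish $w \le m$ and $m \le w$ separately; their combination is the theorem.

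The inequality $w \le m$ is the routine half. If $P = C_1 \sqcup \cdots \sqcup C_m$ is any partition into chains and $A$ is any antichain, then no single chain $C_i$ can contain two elements of $A$, since any two elements of a chain are comparable whereas distinct elements of an antichain are not. Hence $|A| \le m$, and taking $A$ to be a maximum antichain yields $w \le m$; this already shows that at least $w$ chains are always necessary.

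The content of the theorem is the reverse inequality $m \le w$, namely that $P$ can always be partitioned into exactly $w$ chains. I would prove this by induction on $|P|$, the empty poset being trivial. Fix a maximal element $a \in P$ and let $k$ be the width of $P_0 := P \setminus \{a\}$; by the inductive hypothesis $P_0$ partitions into $k$ chains $C_1, \dots, C_k$. Because $w \le m$ (applied to $P_0$) forces every $k$-element antichain of $P_0$ to meet each $C_i$ in exactly one element, I would single out, for each $i$, the highest element $a_i$ of $C_i$ that lies in some size-$k$ antichain of $P_0$, and first check that $A = \{a_1, \dots, a_k\}$ is itself an antichain: if $a_i < a_j$, then a $k$-antichain through $a_j$ would meet $C_i$ at some $x \le a_i < a_j$, contradicting that antichain. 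The argument then splits into two cases according to whether $a$ can be adjoined to $A$. If $A \cup \{a\}$ is an antichain, then $P$ has width $k+1$ and the partition $C_1, \dots, C_k, \{a\}$ into $k+1$ chains is optimal. Otherwise $a$ is comparable to some $a_j$, and since $a$ is maximal this forces $a_j < a$; I would then form the chain $K$ consisting of $a$ together with the portion of $C_j$ up to and including $a_j$. The key observation is that $P \setminus K$ has width at most $k-1$: any surviving $k$-antichain would lie in $P_0$, hence meet $C_j$ at or below $a_j$ and therefore inside $K$, which is impossible. Induction then partitions $P \setminus K$ into at most $k-1$ chains, and adjoining $K$ completes a partition of $P$ into at most $k$ chains; since $P$ contains the $k$-antichain $A$, this partition is optimal.

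The main obstacle lies entirely in this reverse direction: the forward inequality is pure pigeonhole, but producing a chain partition of exactly the minimum size is genuinely delicate. The crux is the correct choice of the elements $a_i$ as the topmost representatives of maximum antichains within each chain, together with the case split that either enlarges the antichain or peels off a single chain $K$ while provably dropping the width by one. An alternative route I would keep in reserve is to encode the chain-partition problem as a maximum matching in an associated bipartite graph and invoke K\"onig's theorem on the equality of maximum matching and minimum vertex cover; this also delivers $m \le w$, but trades the self-contained induction above for an appeal to bipartite matching duality.
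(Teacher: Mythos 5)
Your proof is correct, but it follows a genuinely different route from the paper's: you have essentially reproduced Galvin's proof of Dilworth's theorem, whereas the paper follows B\'ona's (essentially Perles's) induction. In the paper, the induction on $|P|$ splits according to whether some maximum antichain $A$ of size $k$ contains both a non-maximal and a non-minimal element: if so, $P$ is cut into the down-set $Y$ and up-set $X$ of $A$, each a strictly smaller poset with $X \cap Y = A$, induction gives $k$-chain decompositions of both, and the chains are glued along their common endpoints in $A$; if not, every maximum antichain lies among the maximal or minimal elements, and one removes a two-element chain $x \leq y$ (with $x$ minimal, $y$ maximal), argues the width drops to $k-1$, inducts, and adds the chain $\{x,y\}$ back. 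Your route instead deletes a single maximal element $a$, takes the inductive chain partition $C_1, \dots, C_k$ of $P \setminus \{a\}$, and introduces the topmost elements $a_i \in C_i$ lying in some $k$-antichain; the case split is then on whether $a$ can be adjoined to $\{a_1,\dots,a_k\}$, and in the comparable case you peel off the chain $K$ and show $P \setminus K$ has width at most $k-1$. The trade-off: your induction removes one element at a time and never has to check that the pieces $X$ and $Y$ are proper subsets meeting exactly in $A$, or that the two chain decompositions glue compatibly --- precisely the steps where the paper's write-up is loosest --- at the cost of the one subtle verification that $\{a_1,\dots,a_k\}$ is an antichain, which you carry out correctly via the maximality of $a_i$ in $C_i$. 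Your fallback via K\"onig's theorem is also valid but trades the self-contained induction for bipartite matching duality, which the paper does not develop. The pigeonhole direction $w \le m$ is identical in both treatments.
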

\begin{proof}
(based on \cite{MB})\newline
i)	 Let $P$ be a finite poset and $A \in P$ a maximum antichain of size $a$.  Let $c$ be the minimum number of disjoint chains required to fully partition $P$.  Since by definition, any two elements in a chain are comparable and conversely, any two elements in an antichain are incomparable, it follows that each chain of $P$ can contain at most one element from $A$.  Therefore $c \geq a$.\newline

ii)	Let $T(n)$ be the statement:  If $k$ is the size of the maximum antichain in a finite poset $P$, then $P$ can be subdivided into the disjoint union of $k$ chains.  We will proceed by induction on $n$, the number of elements in $P$.\newline

Base Case:  $T(n)$ is clearly true for both 0 element and 1 element posets.\newline
Induction hypothesis:  We assume that $T(n)$ is true for posets in which $\vert P \vert $ is a non-negative integer less than $n$.\newline
We now show that this implies $T(n)$ true for posets with $n$ elements.   Consider two cases:\newline

Case a):	Assume $P$ has a maximum antichain, $A$, of size $k$ in which at least one element is non-maximal, i.e. for some  $a_i \in  A, \exists y \in P$ such that $a_i<y$, and at least one element is non-minimal, i.e. for some $a_j \in  A, \exists z \in  P$ such that $a_j>z$.  Then we can split $P$ into two subsets $X$ and $Y$ where $X$ consists of elements that are $\geq a_i$ and $Y$ consists of elements that are $\leq a_j$.\newline
We claim:\newline i) $X\bigcap Y = A$\newline ii) both $X$ and $Y$ are non-empty since we assumed that $a_i, a_j$ exist \newline iii) both $X$ and $Y$ maintain the partial ordering in $P$ (each is itself a poset) \newline iv) $X$ and $Y$ have cardinality $< n$.\newline
Statements ii), iii) and iv) allow us to apply the inductive hypothesis to both subsets $X$ and $Y$.  This means that $X$ and $Y$ can each be decomposed into a union of $k$ chains.  But the minimal elements of the $k$ chains in $X$ and the maximal elements of the $k$ chains in $Y$ are all elements of $A$.  If we join a chain from $X$ and a chain from $Y$ via their common element in $A$, then $P$ can be spanned by $k$ chains.  Thus, $T(n)$ true for posets with cardinality up to $n$ implies that $T(n)$ is true for posets with cardinality equal to $n$. \newline

Case b):  Now assume that $P$ does not contain a maximum antichain meeting the above description.  Therefore any maximum antichain in $P$ must consist entirely of elements which are either maximal or minimal in $P$.  Let $x$ be a minimal element in $P$ and $y$ be a maximal element in $P$ such that $x \leq y$.  Then consider $P'$ to be $P\backslash\{x,y\}$.  Since we have deleted both a maximal and a minimal element from $P$, the largest antichain in $P'$ will contain $k-1$ elements.  Furthermore, by removing $x$ and $y$ we have created a poset in which $\vert P' \vert <n$, so by the induction hypothesis $P'$ can be decomposed into $k-1$ chains.  But if we add back the two element chain $x \leq y$ onto this total, we find that there are $k$ chains required to partition the original poset $P$.  Once again our assumption that $T(n)$ is true for posets with up to $n$ elements implies that $T(n)$ is true for posets with $n$ elements and therefore by the principle of mathematical induction, $T(n)$ is true for all non-negative integers, $n$.
\end{proof}
In contrast to this lengthy argument, the proof of the dual to this theorem is quite straightforward.  It is presented here since it gives us another way to link the data found for chains with that for antichains in $D_n$.
\text{ }
\\
\begin{theorem}
The size of any maximum chain in a finite poset $P$ is equal to the minimum number of antichains needed to fully partition $P$.
\end{theorem}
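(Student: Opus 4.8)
The plan is to prove the stated equality by establishing two opposing inequalities relating the size $m$ of a maximum chain in $P$ (the number of elements in a longest chain) to the minimum number of antichains required to partition $P$. This is exactly dual to the argument used for Dilworth's Theorem above, but, as the paper notes, the construction here is much more direct.

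First I would dispatch the easy inequality, showing that every antichain partition uses at least $m$ antichains. Suppose $P$ is partitioned into antichains $A_1, A_2, \ldots, A_r$, and let $x_1 < x_2 < \cdots < x_m$ be a maximum chain. Any two of the $x_i$ are comparable, so no antichain can contain two of them; hence each $A_j$ meets the chain in at most one element. Since the chain's $m$ elements are distributed among the $A_j$ with at most one per part, we must have $r \geq m$. Thus the minimum number of antichains in any partition is at least $m$.

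For the reverse inequality I would exhibit an explicit partition into exactly $m$ antichains using a height function. For each $x \in P$ define $h(x)$ to be the number of elements in a longest chain of $P$ whose top (maximum) element is $x$; because $P$ is finite this is well defined and takes values in $\{1, 2, \ldots, m\}$. Setting $A_i = \{x \in P : h(x) = i\}$ yields disjoint sets whose union is $P$, so they form a partition into at most $m$ blocks. The key step is verifying that each $A_i$ is an antichain: if $x, y \in A_i$ with $x < y$, then a longest chain ending at $x$, having $h(x) = i$ elements, can be extended by appending $y$, producing a chain ending at $y$ with at least $i+1$ elements and forcing $h(y) \geq i+1 > i$, contradicting $y \in A_i$. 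Hence every $A_i$ is a genuine antichain.

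Combining the two parts, the minimum number of antichains needed to partition $P$ is exactly $m$, the size of a maximum chain. The only point requiring care is the verification that the level sets of $h$ are antichains; but this follows immediately from the observation that comparability of two same-height elements would permit a strictly longer chain, so I do not expect any real obstacle here. The remainder is a routine counting argument.
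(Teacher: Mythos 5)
Your proof is correct, but it takes a genuinely different route from the paper's. The paper proves the statement by induction on the length $k$ of the longest chain: it lets $M$ be the set of maximal elements of $P$, argues that $P\backslash M$ contains no chain of $k$ elements (any such chain could be extended upward, forcing its top into $M$), applies the induction hypothesis to write $P\backslash M$ as a union of $k-1$ antichains, and adjoins $M$ as the $k$-th. Your argument replaces this induction with an explicit, non-recursive construction: the height function $h(x)$ (size of a longest chain topped by $x$) and its level sets $A_i$, with a one-line verification that each level set is an antichain. The two are close cousins --- iteratively peeling off maximal (or minimal) elements, as in the paper, produces exactly the level sets of a height function --- but your version buys directness: the partition is exhibited in one step, with no induction hypothesis to manage and no need to verify that the truncated poset still satisfies the chain bound. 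You also supply something the paper leaves implicit: the lower-bound inequality, that any partition into antichains must use at least $m$ parts because an antichain meets a chain in at most one element. The paper's proof of this theorem establishes only the upper bound (the statement $T(k)$ asserts that $P$ \emph{is} a union of $k$ antichains), tacitly relying on the analogous part (i) of its Dilworth argument for minimality; your write-up makes the equality self-contained. Both proofs are valid; yours is the cleaner and more complete rendition of this standard result (Mirsky's theorem).
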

\begin{proof} (based on \cite{VLW})\newline
 Let $T(k)$ be the statement: If $P$ contains no chain of $k+1$ elements, then $P$ is the union of $k$ antichains.  Let $P$ be a finite poset with $M$ the set of maximal elements in $P$ and $k$ the length of the longest chain in $P$. Note that $M$ is an antichain. \newline
i) Base Case:  $T(k)$ is obviously true for $k=0$ and $k=1$. \newline
ii) Induction hypothesis:  Assume that $T(k)$ is true for non-negative integers up to $k$.\newline
iii) We show that this implies $T(k)$ is true for a poset $P$ that has no chain consisting of $k+1$ elements. Suppose that $x_1 < x_2 < \ldots < x_k$ is a chain in $P\backslash M$.  Then this would also be a maximal chain for $P$.  But this would imply that $x_k \in M$ which is a contradiction.  It follows that $P\backslash M$ does not contain a chain of $k$ elements.  Therefore by the induction hypothesis, $P\backslash M$ is the union of $k-1$ antichains.  Now by adding the antichain $M$ to this total, we see there are $k$ antichains in $P$.  So we have demonstrated, by the principle of mathematical induction, that $T(k)$ is true for all non-negative integers, $k$.
\end{proof}

\newpage
\section{Chromatic Polynomial}
We now consider the chromatic polynomials for the poset of Dyck paths ordered by inclusion, beginning with some standard graph theory definitions.
\begin{definition} A graph, $G$, is a collection of vertices and edges such that each edge joins exactly two vertices.
\end{definition}

\begin{definition} The chromatic polynomial, $C(G,t)$ is a polynomial in $t$ that represents the number of ways of colouring the vertices of the graph $G$, using $t$ colours, such that any vertices joined by an edge (i.e. adjacent vertices) have different colours.
\end{definition}

Since for any value of $n$ the poset of Dyck paths can be drawn as a graph with vertices representing the Dyck paths and edges illustrating the cover relationships, we are able to compute the corresponding chromatic polynomials.  This is most commonly done using an iterative technique \cite{BF},\cite{webCP} based on the following recurrence.
Let $C(G, t)$ be the chromatic polynomial for a graph $G$ containing an edge, $e$.  Then $$C(G,t)=C(G-e,t) - C(G\backslash e,t),$$ where:\newline
i) $G-e$ is the graph obtained from $G$ by deleting the edge, $e$, yet retaining its vertex endpoints.\newline
ii) $G\backslash e$ is the graph obtained from $G$ by contracting edge $e$ so that its vertex endpoints coincide.\newline
This recurrence must often be applied repeatedly in order to find the chromatic polynomial of a graph.
\begin{example}
Here, we illustrate how the recurrence can be used iteratively to find the chromatic polynomial for the poset of Dyck paths where $n=3$.\newline
Let $G$ be the poset of Dyck paths where $n=3$, as shown in the figure.\newline
\begin{figure}[htbp]
	\centering
		\includegraphics[width=1.2in]{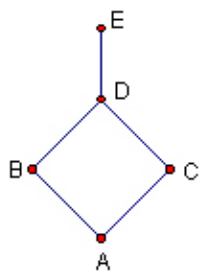}
	\caption{Dyck path poset for n=3}
\end{figure}

Then, by the recurrence, $C(G,t)=C(G-e,t) - C(G\backslash e,t)$.
Now, let $e$ be the edge joining vertices $B$ and $D$. Then, $G-e$ will be the graph that results when edge $BD$ is deleted from $G$ and $G\backslash e$ will be the graph obtained when $G$ is contracted along $BD$.\newline
\begin{figure}[htbp]
	\centering
		\includegraphics[width=2in]{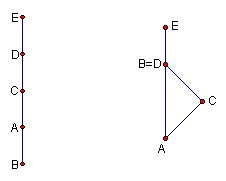}
	\caption{Graphs of G$-$e (left) and G$\backslash$e (right)}
\end{figure}\newline
\noindent For the graph of $G - e$, it can be easily shown that there are $t(t-1)^4$ possible colourings if $t$ colours are available.  There are $t$ choices to colour vertex $B$. For each subsequent vertex up the chain, our only restriction is that it not be coloured the same way as its predecessor.  Therefore, there are $t-1$ choices of colour for each of these vertices.
Thus, the equation is now:
$C(G,t)= t(t-1)^4 - C(G\backslash e,t)$.\newline
But, we can again apply the recursion on $C(G\backslash e,t)$. Let $f$ be the edge joining the vertex $D=B$ to the vertex $C$.  Then $(G\backslash e)-f$ is the resulting graph when edge $f$ is deleted from $G\backslash e$ and $(G\backslash e)\backslash f$ is the outcome of contracting $G\backslash e$ along $f$.  Both graphs are depicted below:\newline
\begin{figure}[htbp]
	\centering
		\includegraphics[width=110px]{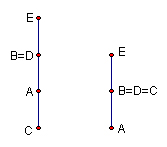}
	\caption{Graphs of (G$\backslash$e)$-$f    (left) and (G$\backslash$e)$\backslash$ f    (right)}
\end{figure}

\noindent By the same argument as above, $C((G\backslash e) - f, t) = t(t-1)^3$ and $C(G\backslash e)\backslash f, t)= t(t-1)^2$.  Putting it all together:
\begin{align*}
C(G,t)&= C(G-e,t) - C(G\backslash e,t)\\
&= t(t-1)^4 - C(G\backslash e,t)\\
&= t(t-1)^4 - [C((G\backslash e)- f)- C((G\backslash e)\backslash f)]\\
&= t(t-1)^4 - t(t-1)^3 + t(t-1)^2\\
&= t^5 - 5t^4 + 10t^3 - 9t^2 + 3t
\end{align*} 
\end{example}
While this recursive method will enable us to find the chromatic polynomial for the poset of Dyck paths for any $n$, it will be a time-consuming, and labour-intensive process.  Clearly this type of computation is ideally suited to a computer program such as Maple or SAGE.  Using the `with networks' environment in Maple, we are able to input the poset graphs and then have the computer output the chromatic polynomials.  The results for $n=0$ to $4$ are shown in the table below.\newline
\text{  }
\\
\begin{center}
\begin{tabular}[h]{l c}
\scriptsize
n &   \scriptsize $C(G, t)$\\
\hline
\scriptsize 0 & \qquad \scriptsize$t$\\
\scriptsize 1 & \qquad \scriptsize$t$\\
\scriptsize 2 & \qquad \scriptsize$t^2 - t$\\
\scriptsize 3 & \qquad \scriptsize$t^5 - 5t^4 + 10t^3 - 9t^2 + 3t$\\
\scriptsize 4 & \qquad \scriptsize$t^{14} - 21t^{13} + 210t^{12} - 1321t^{11} + 5823t^{10} - 18968t^9+46908t^8-89034t^7$\\
\scriptsize   & \qquad \scriptsize$+129490t^6-142270t^5 + 114532t^4 - 63791t^3 + 21940t^2 - 3499t$\\

\end{tabular}
\end{center}

\text{  }
\\
There does not appear to be any obvious pattern in these coefficients.  In addition, it is possible that other graphs could have the same chromatic polynomials as these Dyck path poset graphs.  The literature describes several cases where distinct graphs are shown to have the same chromatic polynomials \cite{BF}.  Whether this is true for the Dyck path poset chromatic polynomials remains an open question.  In the hopes that this issue will someday be resolved, we have added the sequence of coefficients from the above table to the Online Encyclopedia of Integer Sequences, as A141622 in \cite{OLEIS}.
\newpage
\section{Labelled Dyck paths and parking functions}

We now extend our investigation of the $D_n$ poset to look at labelled Dyck paths and their relationship to parking functions.

\begin{definition} Let $n$ be a fixed positive integer.  A parking function of order $n$ is a function  $f:(1,2,\ldots,n) \rightarrow (1,2,\ldots,n)$ such that $\{(x: f(x) \leq i)\} \geq i$ for $1\leq i\leq n$.\label{parkfun}  
\end{definition}

\noindent As in \cite{MB}, \cite{NL2}, \cite{NL1} we imagine the elements $x$ in the domain of $f$ to be cars and describe their standard parking procedure as follows.  Consider a one-way street with $n$ parking spaces, labelled $1,2, \dots, n$.  There are $n$ cars also labelled $1,2,\ldots,n$ which arrive in increasing numerical order to park in the spaces.  Each car $x$ has a favourite parking spot, $g(x)$, where it prefers to park.  When a car arrives, it first goes to its preferred spot, and parks there if the space is available. However, if the space is already occupied, the car continues forward along the street and parks in the next empty parking space, if one exists.  If a car reaches the end of the street without finding a parking space, then we say its parking attempt was unsuccessful.  But, if all $n$ cars have successfully found a parking spot at the end of this procedure, we say that $g$ is a parking function on $[n]$.

\begin{prop}
Let $P_n$ be the set of parking functions, $f$, of order $n$.  Then $\vert P_n\vert = (n+1)^{n-1}.$
\end{prop}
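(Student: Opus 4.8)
The plan is to use Pollak's circular parking argument, which counts \emph{all} preference functions on an auxiliary circle of $n+1$ spots and then extracts the parking functions via a cyclic symmetry; this avoids any messy case analysis or recursion. First I would enlarge the model. Instead of a one-way street with $n$ spots, arrange $n+1$ spots around a circle, labelled by the residues $\{1, 2, \ldots, n, n+1\}$ with $n+1 \equiv 0 \pmod{n+1}$, and consider every function $g : \{1, \ldots, n\} \to \mathbb{Z}/(n+1)\mathbb{Z}$, of which there are exactly $(n+1)^n$. Each car $x$ drives to its preferred spot $g(x)$ and, if it is occupied, continues clockwise (increasing the label modulo $n+1$) to the next free spot. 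Because there are only $n$ cars but $n+1$ spots, this circular procedure never fails: every car parks, and afterward exactly one spot is left empty. Thus $g \mapsto (\text{the unique empty spot})$ is a well-defined map from the $(n+1)^n$ functions to $\mathbb{Z}/(n+1)\mathbb{Z}$.

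Next I would introduce the group action. For $c \in \mathbb{Z}/(n+1)\mathbb{Z}$, let $g + c$ denote the function obtained by adding $c$ to every value modulo $n+1$. Shifting all preferences by $c$ simply rotates the entire final configuration by $c$, so the empty spot of $g + c$ is the empty spot of $g$ shifted by $c$. Hence $\mathbb{Z}/(n+1)\mathbb{Z}$ acts freely on the set of preference functions: each orbit has exactly $n+1$ elements, and within each orbit precisely one function leaves any prescribed spot empty. Consequently the number of $g$ whose empty spot is the distinguished spot $n+1$ equals $(n+1)^n / (n+1) = (n+1)^{n-1}$.

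Finally I would identify this subclass with the parking functions of Definition \ref{parkfun}. The key claim is that $g$ leaves spot $n+1$ empty if and only if $g$ is a parking function in the linear sense. One direction: if spot $n+1$ is empty at the end, then (as one checks) no car ever prefers or occupies it, so all cars park among $\{1, \ldots, n\}$ and the circular run agrees step-for-step with the ordinary street procedure, making $g$ a genuine parking function with values in $\{1, \ldots, n\}$. Conversely, a parking function never needs to wrap past spot $n$, so its circular run also leaves spot $n+1$ empty. Under this identification the parking functions are exactly the $(n+1)^{n-1}$ functions avoiding spot $n+1$, giving $\vert P_n \vert = (n+1)^{n-1}$. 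I expect the main obstacle to be this last equivalence: one must verify carefully that ``a car wraps past the distinguished spot on the circle'' is the \emph{same} event as ``a car runs off the end of the linear street,'' which reduces to showing that the circular and linear parking processes produce identical assignments for as long as no wraparound occurs. (An alternative route, via a bijection to the $(n+1)^{n-1}$ labelled trees on $n+1$ vertices counted by Cayley's formula, is also available but requires substantially more bookkeeping.)
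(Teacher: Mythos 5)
Your proposal is correct and is essentially the same argument the paper gives: Pollak's circular-lot trick with $n+1$ spots, identifying parking functions with the preference functions that leave spot $n+1$ empty, and dividing $(n+1)^n$ by $n+1$. In fact your version is slightly more careful than the paper's, since you justify the equidistribution of the empty spot via the free rotation action of $\mathbb{Z}/(n+1)\mathbb{Z}$, where the paper simply asserts that each spot has an equal chance of remaining empty.
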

\begin{proof}
As in \cite{MB}, we prove this claim by considering a modified parking procedure.  Imagine that the street, while still one-way, is circular rather than linear and has $n+1$ parking spaces for the $n$ cars. There are $n+1$ possible preferred parking spots for each of the $n$ cars, which gives a total of $(n+1)^n$ choices. If a car is unable to successfully park in one of the first $n+1$ spaces, it leaves space $n+1$ and continues searching starting with space 1 again.  Clearly, all $n$ cars will have a parking space after this new procedure, and one parking space will remain empty.  Since cars do not leave their spots once they have parked, if it is space $n+1$ which is empty after all cars have parked, then all of the cars were able to successfully park in the first $n$ spots and so would also have been able to successfully park on the linear street with $n$ spaces. Therefore $f$ is a parking function on $[n]$ iff space $n+1$ remains empty at the end of this modified parking procedure.\newline
However, since each of the $n+1$ positions have an equal chance of being the one that remains empty, space $n+1$ will be left empty in $\frac{1}{n+1}$ of all cases.  Therefore the number of parking functions on $[n]$, $$  \vert P_n\vert = \frac{(n+1)^n}{n+1} = (n+1)^{n-1}.$$
\end{proof}
\noindent The integer sequence representing the number of parking functions on $n$ is A000272 in \cite{OLEIS}: 1, 1, 3, 16, 125, 1296, 16807, 262144, 4782969, ...  While the Catalan numbers (1, 1, 2, 5, 14, 42,\ldots) count ordinary (or unlabelled) Dyck paths, sequence A000272 will be shown to enumerate labelled Dyck paths.  In fact, it is noted in \cite{NL1} that there is a bijection between parking functions and labelled Dyck paths.  We begin by considering an example.
\begin{definition}
A labelled Dyck path is a Dyck path in which the $n$ steps in the north (or vertical) direction are labelled 1 to $n$ such that the labels of consecutive vertical steps increase from bottom to top in each column.
\end{definition}

\noindent These labels are placed in the partition cell that lies to the right of each vertical segment. 
\begin{example}  In the diagram below, a labelled Dyck path for $n = 6$ is shown.\newline
\label{labelledpath}
\begin{figure}[htbp]
	\centering
		\includegraphics[width=3in]{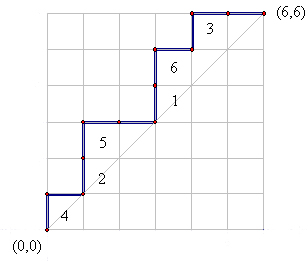}
	\caption{A labelled Dyck path for n = 6}
	\label{labelledDP}
\end{figure}
\end{example}
\noindent If we consider $P$ to be the labelled Dyck path, and number the columns from left to right in the diagram of $P$ with integers from 1 to $n$, then the labels in column $j$ correspond to the numbered cars that prefer to park in space $j$.  Therefore, the corresponding parking function, $f$, is obtained by setting $f(i)=j$ iff the label $i$ occurs in column $j$.  So, for the example above, the corresponding parking function is\newline
$$ f(1) = 4,\qquad  f(2) = 2,\qquad  f(3) = 5,\qquad
  f(4) = 1,\qquad  f(5) = 2,\qquad  f(6) = 4.$$
It is easy to verify that these satisfy definition \ref{parkfun} and therefore $f$ is a valid parking function.\newline
\text{  }
\\
Here is the procedure (from \cite{NL2}) for constructing a labelled Dyck path from a given parking function.  Let $f:(1,2,\ldots,n)\rightarrow(1,2,\ldots,n)$ be a parking function of order $n$.  Within this parking function, let $S_i=\{x:f(x)=i\}$ be the set of cars that prefer parking space $i$. We can construct the corresponding Dyck path using the following procedure. Label the columns in an $n \times n$ grid with the integers $1, 2, \ldots, n$.  Starting in the bottom row and with only one entry per row, place the numbers corresponding to $S_1$ in increasing order in column one of the grid.  Move to the next empty row and the second column of the grid and place the elements of $S_2$ in increasing order, one per row, in that column.  Continue this process in the remaining columns of the grid, always starting with the next empty row, until the elements of $S_n$ have been placed in increasing order in column $n$.  The path from $(0,0)$ to $(n,n)$ is created by drawing a vertical line along each cell boundary that is immediately to the left of the labels and then joining the vertical steps with horizontal ones as needed in order to make a connected path.  The lattice path created using this procedure is a Dyck path since:\newline
a)  A valid parking function will always have $f(x)=1$ for at least one value of $x$.  This ensures that the first step in the corresponding path will be in the vertical direction from the point $(0,0)$.  With each subsequent step, the path either travels vertically up (north) or horizontally to the right (east), and since for any parking function, $\{(x: f(x) \leq i)\} \geq i$ for $1\leq i\leq n$, the path will never pass below the line $y = x$.\newline
b)  A parking function of order $n$ contains $n$ values which will correspond to $n$ vertical steps in the diagram.  Thus, the path will contain $2n$ steps with $n$ in both the horizontal and vertical directions.  It will end at the point $(n,n)$ and will not travel outside the left or top boundaries of the grid.\newline

\begin{prop} There is a bijection between parking functions on $[n]$ and labelled Dyck paths of size $n$.
\end{prop}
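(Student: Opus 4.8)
The plan is to prove the bijection by exhibiting two explicit mutually inverse maps. Let $\Phi$ send a parking function $f$ to the labelled Dyck path produced by the grid construction described just above: form the sets $S_i = \{x : f(x) = i\}$, place the elements of each $S_i$ in increasing order, one per row and starting at the lowest empty row, into column $i$, and read off the lattice path. Let $\Psi$ send a labelled Dyck path $P$ to the function $f$ defined by $f(i) = j$ whenever the label $i$ occupies column $j$ of $P$. First I would confirm each map is well defined. For $\Phi$, arguments (a) and (b) already given show the output is a genuine Dyck path, so it remains only to note that, because the elements of each $S_i$ are inserted in increasing order up a single column, the labelling satisfies the increasing-up-each-column requirement in the definition of a labelled Dyck path.

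The substantive point for $\Psi$ is to check that $f = \Psi(P)$ satisfies the condition of Definition \ref{parkfun}. Here I would use the geometry of $P$: the number of labels lying in columns $1, \ldots, i$ equals the height $y_i$ of $P$ at the lattice point $(i, y_i)$ it occupies the first time its $x$-coordinate reaches $i$, since by then exactly the north steps of columns $1$ through $i$ have been taken and none of the later ones. Because $P$ never passes below the line $y = x$, this point satisfies $y_i \ge i$. As $|\{x : f(x) \le i\}|$ is precisely the number of labels in columns $1, \ldots, i$, this yields $|\{x : f(x) \le i\}| = y_i \ge i$ for every $i$, which is exactly the parking condition.

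Finally I would verify $\Psi \circ \Phi = \mathrm{id}$ and $\Phi \circ \Psi = \mathrm{id}$. The first is immediate, since by construction the label $i$ sits in column $j$ of $\Phi(f)$ exactly when $i \in S_j$, that is when $f(i) = j$, so $\Psi$ reads $f$ back off. The second direction is where I expect the \emph{main obstacle}: starting from $P$, the sets $S_i$ recovered by $\Psi$ are the column-label sets of $P$, so $\Phi$ certainly rebuilds each column's labels in increasing order, but one must argue that the reconstructed vertical runs sit at the same heights as in $P$. This follows from the fact that in any Dyck path the vertical runs, read left to right, occupy consecutive blocks of rows with the $i$-th run ending at height $|S_1| + \cdots + |S_i|$, which is exactly the cumulative block into which the ``next empty row'' rule of $\Phi$ inserts $S_i$; hence $\Phi(\Psi(P)) = P$. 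Making this height-matching precise, together with the weak-above-diagonal input to the parking condition, are the only places where the Dyck path structure is genuinely needed.
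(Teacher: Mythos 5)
Your proof is correct, but it is organized differently from the paper's. The paper fixes the single map $g$ (your $\Phi$) and argues bijectivity in two halves: injectivity, by noting that two distinct parking functions must disagree for some car $j$, so the label $j$ lands in different cells and the resulting labelled paths differ; and surjectivity, by reading a function $f$ off an arbitrary labelled Dyck path and checking, via the weakly-above-diagonal condition, that $f$ is a parking function — this is the same height count you use to show $\Psi$ is well defined. You instead exhibit $\Psi$ explicitly and verify both compositions $\Psi\circ\Phi=\mathrm{id}$ and $\Phi\circ\Psi=\mathrm{id}$. The comparison is instructive: the paper's injectivity argument is shorter than your $\Phi\circ\Psi=\mathrm{id}$ check, but its surjectivity step is incomplete as written — showing that the function $f$ read off a path $D$ is a valid parking function does not by itself show $g(f)=D$, which is what surjectivity requires. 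Closing that gap needs exactly the observation you flag as the main obstacle: since the path is monotone, the north steps of column $i$ occupy the consecutive block of rows from $|S_1|+\cdots+|S_{i-1}|+1$ through $|S_1|+\cdots+|S_i|$, which coincides with the ``next empty row'' rule in the construction of $\Phi$, and the increasing-up-each-column labelling then forces the labels to match as well. So your two-sided-inverse formulation makes explicit a step the paper leaves implicit, and is the more complete of the two arguments.
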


\begin{proof} 
Let $f$ be a parking function of order $n$ and $D_n$ be a labelled Dyck path of size $n$.  We need to show that the function $g$ that maps the set of parking functions, $\{f_1, f_2, ..., f_k\}$, to the set of labelled Dyck paths in $D_n$ is both 1:1 and onto.\newline

\noindent(i)  $g$ is 1:1\newline
Let $f_1 = (a_1, a_2, \ldots, a_n)$ and $f_2 = (b_1, b_2, \ldots, b_n)$ be different parking functions of size $n$ where the $a_i$ and $b_i$ represent the preferred parking spaces for car $i$ under each function.  Suppose that these two parking functions mapped to the same labelled Dyck path, $D_n$.
Since $f_1$ and $f_2$ are different, then there must be at least one $a_i \in f_1 \neq b_i \in f_2$, i.e. at least one of the cars prefers a different parking space under parking function $f_1$ than it does under parking function $f_2$.  Suppose this difference occurs for the $j^{th}$ car.  Then, according to the procedure for constructing labelled Dyck paths from parking functions, the label $j$ will be placed in a different cell for $f_1$ than it will for $f_2$.  Since the same label is placed in different lattice cells under the two parking functions, they cannot correspond to the same labelled Dyck path.  Therefore we have contradicted the assumption that two different parking functions can be mapped to the same labelled Dyck path.  Thus, $g$ is 1:1.\newline

\noindent(ii)  $g$ is onto\newline
Let $D_n$ be a labelled Dyck path of size $n$.  Then it will consist of $n$ vertical (north) segments (beginning with one from (0,0) to (0,1)) and $n$ horizontal (east) segments such that the path never travels below the line $y = x$.  Each of the vertical segments will be labelled with an integer from 1, 2,\ldots, n.  Since there is a vertical segment along the left side of the bottom cell in column one, then $f(x) = 1$ for at least element, $x$, under the parking function $f$, i.e. at least one of the cars prefers to park in space one.  Since for any Dyck path the $y$-values must remain greater than or equal to the $x$ values, the path must contain a cumulative total of at least $i$ vertical segments before it horizontally crosses column $i$.  Thus, $\{x:f(x)\leq i\}\geq i$ for $1\leq i \leq n$, or equivalently, the number of cars preferring to park in spot 1 or 2 or $\ldots$ or $i \geq i$.  But, by definition, this means that $f$ is a valid parking function.  Therefore, $g$ is onto.
\end{proof}
\text{ }
\\
Recall that the area of a Dyck path (labelled or not) is the number of complete lattice squares that lie between the path and the line $y = x$.  Parking functions give us another method for finding the area of a Dyck path \cite{NL2}.  If we consider the set of all complete lattice cells which lie above the main diagonal in an $n \times n$ grid, it is straightforward to show that these can be counted by $$\sum_{i=1}^{n-1}i=1+2+3+...+(n-1)=\frac{n(n-1)}{2}.$$
But for any parking function, $f$, label $i$ will occur in column $f(i)$.  Therefore there are $f(i)-1$ lattice cells within the $n \times n$ grid and to the left of label $i$.  Since these cells are outside the Dyck path that corresponds to $f$, we subtract them from the total:
$$area(D)=\frac{n(n-1)}{2}-\sum_{i=1}^{n}(f(i)-1)=\frac{n(n+1)}{2}-\sum_{i=1}^{n}f(i).$$
Thus, in example \ref{labelledpath} (figure \ref{labelledDP}) above, we have:
$$area(D) = 21 - (4+2+5+1+2+4) = 3.$$
\text{  }
\\
\begin{definition} The area vector of a Dyck path is a sequence $g(D)=(g_1,\ldots,g_n)$, where $g_i$ is the number of complete lattice squares between the path and the line $y = x$ in row $i$ of the partition diagram where the rows are labelled 1 to $n$ from bottom to top.
\end{definition}
\text{  }
\\
\begin{definition} For a labelled Dyck path, the label vector $p(D)=(p_1,\ldots,p_n)$ is defined by letting $p_i$ be the unique label in row $i$ of the partition diagram.
\end{definition}
\text{  }
\\
\begin{example}
For the labelled Dyck path, $D$ in the figure below, area($D$)= 4.  The area vector $g(D)=(0,0,1,2,1)$ and the label vector $p(D)=(5,1,2,4,3)$.
\end{example}
\begin{figure}[htbp]
	\centering
		\includegraphics[width=2.2in]{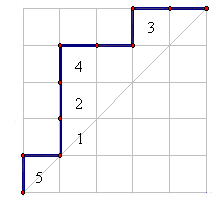}
	\caption{Labelled Dyck path with area = 4}
	\label{areavector}
\end{figure}
\noindent\normalfont
It is evident that any labelled Dyck path is completely determined by its area and label vectors \cite{NL1}.  Furthermore, in \cite{NL2} it is noted that the following conditions must hold for a pair of vectors $(g, p)$ corresponding to a labelled Dyck path of size $n$.\newline
\text {  }
\\
(1) $g$ and $p$ have length $n$.\newline
(2) $g_1 = 0$.\newline
(3) $g_i \geq 0$ for $1 \leq i < n$.\newline
(4) $g_{i+1} \leq g_i + 1$ for $1 \leq i < n$.\newline
(5) $p$ is a permutation of $\{1, 2, \ldots, n\}$.\newline
(6) $g_{i+1}=g_i + 1$ implies $p_i < p_{i+1}$.\newline
\text{  }
\\
As explained above, the set of all complete lattice cells which lie above the main diagonal in an $n \times n$ grid, is counted by $n(n-1)/2$.
This total can be split into two groups: the complete lattice cells which lie above the Dyck path and those that lie between the Dyck path and the main diagonal.  We have previously shown that the first group, which forms Young diagrams, is counted by the Catalan numbers for each $n$ since there is a bijection between the number of partitions above the Dyck path and the number of ordinary (or unlabelled) Dyck paths.  Now, the second group, whose elements form the area vectors, will be the complement of the Young partitions (above the main diagonal), and therefore will also be counted by the Catalan numbers.  So, by counting the number of distinct area vectors for any $n$, we will count the number of unlabelled Dyck paths.
\begin{example}
Consider the set of Dyck paths for $n = 3$.  By condition (1) above we know that any area vector, $g$, will have length 3.  By conditions (2), (3), and (4) we know that the only possible area vectors are these five: (0,0,0), (0,0,1), (0,1,0), (0,1,1) and (0,1,2). This corresponds with $C_3 = 5$. 
\end{example}
\noindent Alternatively, the number of unlabelled Dyck paths for any $n$ can be obtained from the set of labelled Dyck paths using a different type of label vector, called a column label vector.
\begin{definition}
A column label vector, $p'(D)=(p'_1,\ldots,p'_n)$ is defined by letting $p'_i$ be the column in which $i$ appears in the partition diagram.
\end{definition}
\noindent In figure \ref{areavector} above, the column label vector would be (2,2,4,2,1).  We note that other labellings of the same Dyck path would have column vectors composed of these same digits but arranged in a different order.  We will refer to the set of column label vectors with identical content but different ordering as a content group.  So, of the possible column vectors for labelled Dyck paths of size $n$, if we count only the ones from each content group in which the digits are arranged in their minimum order,  we will obtain the Catalan number sequence.  
\begin{example}
Here are the six labelled Dyck paths in $D_3$ with column label vectors in the \{1,2,3\} content group.  Of these, the red graph's column label vector has the minimum ordering, and therefore would be this group's representative.
\begin{figure}[htbp]
	\centering
		\includegraphics[width=3in]{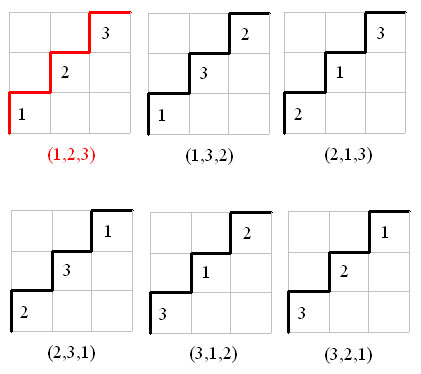}
	\caption{Labelled Dyck paths with column label vectors}
	\label{Dyck paths}
\end{figure}
\end{example}

However, this method of counting offers another advantage.  If we consider two column label vectors $p'(D_1)=(p'_1, p'_2,\ldots,p'_n)$ and $q'(D_2)=(q'_1, q'_2,\ldots,q'_n)$ where both $p'(D_1)$ and $q'(D_2)$ have the minimum ordering  for their content groups,  then $D_1 > D_2$ iff $p'_i \leq q'_i$ for all $i$ where $1 \leq i \leq n$.  We can recover the Dyck path poset (ordered by inclusion) by imposing this partial ordering on all of the column vectors which are least among their content groups for Dyck paths of size $n$.
\begin{example}
For the Dyck paths with $n = 3$, the column vectors which are least among their content groups are: (1,2,3),(1,1,3),(1,2,2),(1,1,2),(1,1,1).  Since each of these corresponds to a Dyck path, we know that there is a partial order given by $D_1 > D_2$ iff $p'_i \leq q'_i$ for all $i$ where $1 \leq i \leq n$ as illustrated in the diagram below.
\end{example}
\begin{figure}[htbp]
	\centering
		\includegraphics[width=1.5in]{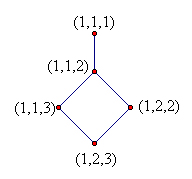}
	\caption{Dyck path poset n = 3}
	\label{Dyck path poset n = 3}
\end{figure}
\text{  }

\newpage
\section{Conclusion}
This paper has examined many of the basic properties of the poset of Dyck paths ordered by inclusion. We have considered a variety of methods for enumerating the poset and some important subsets within it such as chains and antichains.  In doing this, we have noted bijections between elements of the poset of Dyck paths and other combinatorial objects such as partitions, tableaux, other posets and graphs.  Finally, we extended the discussion to include labelled Dyck paths which led us to a study of parking functions.

However, several opportunities remain for future investigation.  Although we have added several entries to the Online Encyclopedia of Integer Sequences, \cite{OLEIS}, it would be desirable to extend some of these, in particular the antichain sequences A143673 and A143674.  The data for these two sequences were obtained by brute force counting methods using MuPAD:Combinat, \cite{webMuPAD}; it would be advantageous to find formulas that generate these terms.  In addition to those for antichains, maximum antichains and maximal antichains, we would also like to find explicit formulas for the total number of chains, and for chromatic and chain polynomials.  Although we have calculated these for values up to $n=5$, general formulas would allow us to determine higher values more expeditiously.  Another interesting challenge would be finding the bijection between the number of intervals in $D_n$ and the paths described in A005700 of \cite{OLEIS}.  Furthermore, since the poset of Dyck paths is just one of several combinatorial objects that are enumerated by the Catalan numbers, it would be interesting to consider the relationship between Dyck paths and the other objects.  For example, it would be nice to know if the $q$-analogs have any corresponding combinatorial interpretations in these other sets.  Such a discovery might lead to the resolution of open problems such as proving the $C_n(q,t)$ symmetry combinatorially.  Other avenues for future research include examining `nested' Dyck paths as has been started in \cite{MZ} and \cite{NLGW}, or enumerating the 3-D case where Dyck paths don't cross below the main diagonal in a cube.  Finally there is always the possibility that the work currently being done on Dyck paths will find practical application.  A few examples of this have already been seen, such as with chemical polymers building against a wall \cite{AR} or in models of light reflection and transmission through various media \cite{VS}.  Whatever the case may be, it is clear that there is still much to explore in the poset of Dyck paths ordered by inclusion.
 
\newpage
\section{Technical Acknowledgment}
Credit goes to Professor Mike Zabrocki for his assistance in writing MuPAD:Combinat programs for the antichains section of this paper.

\newpage

\end{document}